\newcommand{\mtrx}[1]{\boldsymbol{#1}}
\newcommand{\vctr}[1]{\boldsymbol{#1}}
\newcommand{\lattice}[1]{\mathcal{L}_{#1}}
\newcommand{\vzero}{\vctr{0}}
\newcommand{\vone}{\vctr{1}}
\newcommand{\vunit}[1]{\vctr{e}_{#1}}
\newcommand{\fcc}{\mathrm{fcc}}
\newcommand{\bcc}{\mathrm{bcc}}
\newcommand{\mXi}{\mtrx{\Xi}}
\newcommand{\mZeta}{\mtrx{\mathrm{Z}}}
\newcommand{\matid}[1]{\mtrx{I}_{#1}}
\newcommand{\matone}[1]{\mtrx{J}_{#1}}
\newcommand{\mG}{\mtrx{G}}
\newcommand{\mQ}{\mtrx{Q}}
\newcommand{\mA}{\mtrx{A}}
\newcommand{\mGamma}{\mtrx{\mathit{\Gamma}}}
\newcommand{\Mproj}[1]{\mtrx{P}_{#1}}
\DeclareMathOperator{\rank}{rank}
\DeclareMathOperator{\vol}{vol}
\newcommand{\vx}{\vctr{x}}
\newcommand{\va}{\vctr{a}}
\newcommand{\vy}{\vctr{y}}
\newcommand{\vj}{\vctr{j}}
\newcommand{\vp}{\vctr{p}}
\newcommand{\vq}{\vctr{q}}
\newcommand{\vu}{\vctr{u}}
\newcommand{\vv}{\vctr{v}}
\newcommand{\vxi}{\vctr{\xi}}
\newcommand{\vzeta}{\vctr{\zeta}}
\newcommand{\reffig}[1]{Figure~\ref{#1}}
\newcommand{\reflem}[1]{Lemma~\ref{#1}}
\newcommand{\refcor}[1]{Corollary~\ref{#1}}
\newcommand{\Z}{\mathbb{Z}}
\newcommand{\R}{\mathbb{R}}
\newcommand{\Gfcc}{\mtrx{G}_{\fcc}}
\newcommand{\Gbcc}{\mtrx{G}_{\bcc}}
\newcommand{\perm}{\pi}
\newcommand{\Mperm}{\mtrx{\mathit{\Pi}}}
\newcommand{\Mshift}[2]{\mtrx{\mathit{\Gamma}}_{#1,#2}}
\newcommand{\Mswitch}[2]{\mtrx{\mathit{\Sigma}}_{#1,#2}}
\newcommand{\trans}[1]{#1^T}
\newtheorem{lemma}{Lemma}
\newtheorem{corollary}{Corollary}
\newtheorem{definition}{Definition}
\newcommand{\setJ}{\mathcal{J}}
\newcommand{\setV}{\mathcal{V}}
\newcommand{\genmat}{\mtrx{G}}
\newcommand{\An}[1]{\mathcal{A}_{#1}}
\newcommand{\Andual}[1]{\mathcal{A}_{#1}^*}
\newcommand{\Dn}[1]{\mathcal{D}_{#1}}
\newcommand{\WeylG}[1]{W\!\!\left(#1\right)}
\newcommand{\aWeylG}[1]{W_{\!a}\!\!\left(#1\right)}
\newcommand{\WeylGA}[1]{\WeylG{\sysA{#1}}}
\newcommand{\aWeylGA}[1]{\aWeylG{\sysA{#1}}}
\newcommand{\GAn}[1]{\mG_{\!\!\mathcal{A}}}
\newcommand{\GAndual}[1]{\mG_{\!\!\mathcal{A}^*}}
\newcommand{\Gi}{G_1}
\newcommand{\Gii}{G_2}
\newcommand{\Giii}{G_3}
\newcommand{\Gsym}[1]{S_{#1}}
\newcommand{\Gshift}[1]{C_{#1}}
\newcommand{\simplex}{\sigma}
\newcommand{\CH}[1]{\mathcal{CH}\left(#1\right)}
\newcommand{\ZAn}[1]{\Z_{\!\!\mathcal{A}}^{#1}}
\newcommand{\ZDn}[1]{\Z_{\!\!\mathcal{D}}^{#1}}
\newcommand{\ZAndual}[1]{\Z_{\!\!\mathcal{A}^*}^{#1}}
\newcommand{\pln}[1]{\mathbb{H}_{#1}}
\newcommand{\plnone}{\pln{\vone}}
\newcommand{\vor}[2]{V_{#1}\left(#2\right)}
\newcommand{\vorzero}{\vor{}{\vzero}}
\newcommand{\simplexCube}{\tau}
\newcommand{\rhomb}[1]{{\Diamond}_{#1}}
\newcommand{\cube}[2]{\square^{#1}_{#2}}
\newcommand{\sysA}[1]{\mathscr{A}_{#1}}
\begin{document}

\title{Algebraic Characterization of the Voronoi Cell Structure of the $\An{n}$ Lattice}
\author{Minho Kim}

\maketitle

\begin{abstract}
We 
characterized the combinatorial 
structure of the Voronoi cell
of the $\An{n}$ lattice in arbitrary dimensions.
Based on the well-known fact that the Voronoi cell is 
the disjoint union of $(n+1)!$ congruent simplices,
we show that it is the disjoint union of
$(n+1)$ congruent hyper-rhombi, which are 
the generalized rhombi or trigonal trapezohedra.
The explicit structure of the faces
is investigated,
including the fact that all the $k$-dimensional faces, $2\le k\le n-1$,
are hyper-rhombi.
We show it to be the vertex-first orthogonal projection
of the $(n+1)$-dimensional unit cube. Hence
the Voronoi cell is a zonotope.
We prove that in low dimensions ($n\le 3$)
the Voronoi cell can be understood as 
the section of that of the $\Dn{n+1}$ lattice
with the hyperplane orthogonal to the diagonal direction.
We provide all the explicit coordinates and transformation matrices
associated with our analysis.
Most of our analysis
is algebraic and
easily accessible to those less familiar with 
the Coxeter-Dynkin diagrams.
\end{abstract}

\section{Introduction}
Due to their
higher symmetry compared to other lattices,
the root lattices have been
investigated in various fields 
including 
crystallography, 
signal sampling,
data discretization, etc.
In many applications, the Voronoi cells
of root lattices play a crucial
role.

\citet{conway82voronoi}
derived the Voronoi cells of root lattices
(and their dual lattices) based on the Coxeter-Dynkin diagrams.
Specifically, they showed that the Voronoi cell
$\vorzero$
for a root lattice $\lattice{}$ is the 
    union of the images of its fundamental simplex
    under its Weyl group $\aWeylG{\lattice{}}$.
\citet{moody92voronoi}
determined the number of facets of all dimensions of
the Voronoi cells of the root lattices based on the
the decorated Coxeter-Dynkin diagrams.
\citet{koca18explicit}
analyzed the structure of the $(n-1)$-dimensional faces of the
Voronoi cells and related polytopes of the $\An{n}$ and $\Dn{n}$
lattices and their duals.
They showed that the $(n-1)$-dimensional faces of the Voronoi cells of the $\An{n}$
lattice are the generalized rhombohedra, which we call \emph{hyper-rhombi} in this work.

\section{Notations}
Our characterization uses the following notations.
\begin{itemize}
\item
$\vone$ and $\vzero$ are the (column) vectors composed of $1$s and $0$s
whose dimensions are determined in the context.
\item
$\vunit{i}$ is the $i$-th standard unit (column) vector.
\item
we denote the vectors with repeating elements
as below  \cite{conway82voronoi}.:
    \[
        (
            \underbrace{a,\dots,a}_{\alpha},
            \underbrace{b,\dots,b}_{\beta}
        )=:
    (a^\alpha,b^\beta).
    \]
\item
    $\matid{n}$ is the identity and $\matone{n}:=\vone\trans{\vone}$.
\item
    $\Mswitch{i}{j}$ is the \emph{row exchange}
    matrix of the $i$-th and $j$-th rows.
\item
    The $n\times n$ \emph{circular shift} matrix
    is defined as
\begin{equation}
\label{eq:M.shift}
        \Mshift{n}{j}:=
            \trans{
            \begin{bmatrix}
                \vunit{n-j+1}   &
                \cdots  &
                \vunit{n}   &
                \vunit{1}   &
                \cdots  &
                \vunit{n-j}
            \end{bmatrix}
            }
\end{equation}
which is obtained by circularly shifting the rows of $\matid{n}$ downward by $j$.

\item
The $n$-dimensional (closed) unit cube embedded in $\R^n$
is defined as
\[
    \cube{n}{}:=
    [0,1]^n
    =
    \left\{
        \sum_{i=1}^nt_i\vunit{i}:\vunit{i}\in\R^n,0\le t_i\le 1            
    \right\}
\]
and
\[
    \cube{n}{\setJ}:=
    \left\{
        \sum_{i\in\setJ}
        t_i\vunit{i}:
        \vunit{i}\in\R^n,
        0\le t_i\le 1
    \right\},
    \quad
    \setJ\subset\{1,\dots,n\}
\]
is the $(n-\#\setJ)$-dimensional face 
of $\cube{n}{}$
adjacent to $\vzero$
and
spanned by the unit vectors
excluding those associated with the indices in $\setJ$.
\item
The \emph{convex hull} of the point set $\{\vp_1,\dots,\vp_k\}$ is
denoted as
\[
    \CH{\{\vp_1,\dots,\vp_k\}}.
\]
\end{itemize}
\section{Background}
In this section we review the basic knowledge about the point groups
(represented by matrix groups),
the root system $\sysA{n}$,
and lattices.

\subsection{Point Groups}
A \emph{point group}
is a group composed of
\emph{isometries}  that have a fixed point.
Here we represent a point group
by a  \emph{matrix group}
with $\vzero$ as the fixed point.
Below are some point groups 
to be used in the main discussion.

The \emph{symmetric group}
$\Gsym{n}$ 
is the finite group composed of $n!$ permutation matrices.
Below is a well-known triangulation of the unit cube 
based on the symmetric group.
\begin{figure*}
\begin{center}
\scalebox{1}{
\includegraphics[width=.3\textwidth]{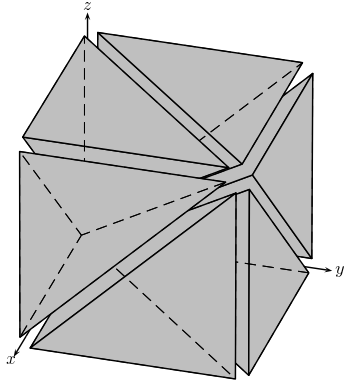}
}
\end{center}
\caption[]{
    Kuhn triangulation of the $3$-dimensional unit cube.
}
\label{fig:kuhn}
\end{figure*}
\begin{lemma}
\label{lem:kuhn}
Given the $n$-dimensional simplex
\begin{equation}
\label{eq:kuhn.simplex}
    \simplexCube_0
    :=
    \CH{\{\vu_j:0\le j\le n\}}
    \text{ where }
    \vu_j:=
        \vzero+\sum_{i=1}^j\vunit{i}=
        (1^j,0^{n-j}),
\end{equation}
the (non-trivial) disjoint union of the $n!$ images of 
$\simplexCube_0$
under $\Gsym{n}$ forms the unit cube
$\cube{n}{}$, i.e.,
\[
    \cube{n}{}=
    \bigsqcup_{
        \mathclap {
            \Mperm \in \Gsym{n}
            }
            }
    \Mperm\tau_0.
\]
In addition, they all share the edge connecting $\vzero$ and $\vone$.
Refer to \reffig{fig:kuhn} for $n=3$.
\end{lemma}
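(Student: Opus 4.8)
The plan is to identify each image $\Mperm\tau_0$ with a region of $\cube{n}{}$ cut out by inequalities on the coordinates, and then observe that these regions partition the cube according to the sorting order of the coordinates. First I would describe $\tau_0$ explicitly: a point $\vx=\sum_j t_j\vu_j$ with $t_j\ge 0$ and $\sum_j t_j\le 1$ has coordinates $x_i=\sum_{j\ge i}t_j$, so $\tau_0$ is exactly $\{\vx\in\R^n : 1\ge x_1\ge x_2\ge\cdots\ge x_n\ge 0\}$, the set of points in the cube whose coordinates are weakly decreasing. (The barycentric coordinates $t_j=x_j-x_{j+1}$, with $x_{n+1}:=0$, recover the $\vu_j$ representation and show the map is a bijection, so $\tau_0$ is a genuine $n$-simplex.)

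Next I would apply a permutation matrix $\Mperm$ corresponding to a permutation $\perm\in\Gsym{n}$. Since $\Mperm$ permutes coordinates, $\Mperm\tau_0=\{\vx\in\cube{n}{}: x_{\perm(1)}\ge x_{\perm(2)}\ge\cdots\ge x_{\perm(n)}\}$, the set of cube points whose coordinates decrease along the order prescribed by $\perm$. The key step is then: every $\vx\in\cube{n}{}$ lies in at least one such region (sort its coordinates), so $\bigcup_\Mperm\Mperm\tau_0=\cube{n}{}$; and two regions for distinct $\perm,\perm'$ meet only where some two coordinates are equal, i.e.\ on a common facet, which has measure zero — this is the ``non-trivial disjoint union.'' Finally, the edge claim: the vertices of $\Mperm\tau_0$ are $\Mperm\vu_j=(1^j,0^{n-j})$ with the $1$s placed in the coordinate positions $\perm(1),\dots,\perm(j)$; the cases $j=0$ and $j=n$ give $\vzero$ and $\vone$ for every $\perm$, and the segment between consecutive vertices $\Mperm\vu_0$ and $\Mperm\vu_1$ is $\Mperm\tau_0$'s edge from $\vzero$ to $\vunit{\perm(1)}$, while more directly the segment $\CH{\{\vzero,\vone\}}$ is the intersection $\{x_1=\cdots=x_n\}\cap\cube{n}{}$, which satisfies the defining inequalities of $\tau_0$ and hence of every $\Mperm\tau_0$, so it is a face (edge) shared by all of them.

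I do not expect a serious obstacle here; the only point requiring a little care is making precise what ``non-trivial disjoint union'' means — namely that the simplices have disjoint interiors and cover the cube, with overlaps confined to lower-dimensional faces — and checking that the pairwise intersection $\Mperm\tau_0\cap\Mperm'\tau_0$ is genuinely a common face rather than something messier. That follows because the intersection is the solution set within the cube of the combined chains of inequalities, which forces equality of coordinates on the positions where the two orders disagree, yielding a face of each simplex. Everything else is bookkeeping with the coordinate description, and the figure for $n=3$ illustrates the six tetrahedra sharing the main diagonal.
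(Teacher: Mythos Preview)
Your argument is correct and is in fact the standard proof of the Kuhn triangulation via the coordinate-ordering description of the simplices. The paper, however, does not spell this out: it simply refers the reader to Kuhn's original paper for the decomposition $\cube{n}{}=\bigsqcup_{\Mperm}\Mperm\tau_0$, and for the shared-edge claim it gives only the one-line observation that $\vzero$ and $\vone$ are fixed by every $\Mperm\in\Gsym{n}$, hence the segment $\CH{\{\vzero,\vone\}}$ is carried to itself and therefore lies in every $\Mperm\tau_0$. Your version has the advantage of being self-contained and of making the ``non-trivial disjoint union'' precise (interiors disjoint, overlaps on common faces), whereas the paper's version is terser and relies on the reader knowing or looking up the Kuhn reference; for the edge claim specifically, the paper's invariance argument is slightly slicker than your inequality check, though both are immediate.
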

\begin{proof}
Refer to
        \citet{kuhn60some}.
This decomposition is called
the 
\emph{Kuhn triangulation}.
Since
$\vzero$ and $\vone$
are invariant under $\Gsym{n}$,
the edge connecting them is shared by all the simplices.
\end{proof}

\begin{corollary}
\label{cor:kuhn}
Let
\[
    \mA
    =
    \alpha\matid{n}
    +\beta\matone{n},
    \quad
    \alpha,\beta\in\R
   \] be invertible
and $\va_j$ be the $j$-th column of $\mA$.
The disjoint union of the $n!$ images
of the $n$-simplex
\[
    \CH{
    \left\{
        \vzero+
        \sum_{i=1}^j\va_i:
        0\le j\le n
    \right\}
    }
\]
under $\Gsym{n}$ forms an $n$-dimensional parallelepiped
$\mA\cube{n}{}$
and the simplices share the edge
connecting
$\vzero$ and $\sum_{j=1}^n\va_j$.
\end{corollary}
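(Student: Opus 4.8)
The plan is to obtain the decomposition by pushing the Kuhn triangulation of \reflem{lem:kuhn} forward through the invertible linear map $\mA$. The crucial observation is that $\mA=\alpha\matid{n}+\beta\matone{n}$ commutes with every permutation matrix. Indeed, for $\Mperm\in\Gsym{n}$ we have $\Mperm\vone=\vone$ and $\trans{\Mperm}\vone=\vone$ (since $\trans{\Mperm}$ is again a permutation matrix), so $\Mperm\matone{n}=\Mperm\vone\trans{\vone}=\vone\trans{\vone}=\matone{n}$ and likewise $\matone{n}\Mperm=\vone\,\trans{(\trans{\Mperm}\vone)}=\vone\trans{\vone}=\matone{n}$. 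Together with the trivial commutation $\Mperm\matid{n}=\matid{n}\Mperm$ this gives $\mA\Mperm=\Mperm\mA$ for all $\Mperm\in\Gsym{n}$.

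Now apply $\mA$ to the identity $\cube{n}{}=\bigsqcup_{\Mperm\in\Gsym{n}}\Mperm\simplexCube_0$ from \reflem{lem:kuhn}. Since $\mA$ is invertible, it is a linear homeomorphism of $\R^n$, so it carries a non-trivial disjoint union (simplices meeting only along boundary faces, with full union the cube) to another one; using the commutation relation,
\[
    \mA\cube{n}{}
    =\bigsqcup_{\Mperm\in\Gsym{n}}\mA\Mperm\simplexCube_0
    =\bigsqcup_{\Mperm\in\Gsym{n}}\Mperm\bigl(\mA\simplexCube_0\bigr).
\]
The left-hand side is precisely the parallelepiped $\mA\cube{n}{}=\{\sum_{j=1}^n t_j\va_j:0\le t_j\le 1\}$ spanned by the columns $\va_j$ of $\mA$. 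For the right-hand side, a linear map takes convex hulls to convex hulls, so $\mA\simplexCube_0=\mA\,\CH{\{\vu_j:0\le j\le n\}}=\CH{\{\mA\vu_j:0\le j\le n\}}$, and $\mA\vu_j=\mA\sum_{i=1}^j\vunit{i}=\sum_{i=1}^j\va_i$; hence $\mA\simplexCube_0$ is exactly the $n$-simplex in the statement. This establishes the first claim.

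For the shared edge: in \reflem{lem:kuhn} all $n!$ images of $\simplexCube_0$ contain the segment from $\vzero$ to $\vone$, so all $n!$ images of $\mA\simplexCube_0$ contain its image under the bijection $\mA$, namely the segment from $\mA\vzero=\vzero$ to $\mA\vone=\sum_{j=1}^n\va_j$; this segment is fixed by every $\Mperm\in\Gsym{n}$ because $\Mperm\mA\vone=\mA\Mperm\vone=\mA\vone$ (and $\Mperm\vzero=\vzero$), so it lies in $\Mperm(\mA\simplexCube_0)$ for every $\Mperm$. I do not anticipate a genuine obstacle here: the entire content sits in the commutation identity $\mA\Mperm=\Mperm\mA$, with the remaining facts — that an invertible linear map respects simplicial decompositions and convex hulls — being routine; the only point needing a word of care is the meaning of ``non-trivial disjoint union,'' which transfers verbatim because $\mA$ is a linear homeomorphism.
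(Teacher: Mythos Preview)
Your proof is correct and follows essentially the same approach as the paper: the key step in both is the commutation $\mA\Mperm=\Mperm\mA$ for all $\Mperm\in\Gsym{n}$, after which the Kuhn triangulation of \reflem{lem:kuhn} is pushed forward through the invertible linear map $\mA$. Your version is simply more explicit in verifying the commutation and in spelling out why the shared edge persists.
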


\begin{proof}
    The claim holds since, for $\Mperm\in\Gsym{n}$, 
    \[
               \Mperm\mA
               =
               \mA\Mperm
    \]
    and therefore
    \[
            \Mperm
            \left(
                \sum_{j\in\setJ}\va_j
            \right)
        =
            \Mperm
            \left(
                \sum_{j\in\setJ}\mA\vunit{j}
            \right)
        =
            \mA
            \left(
            \Mperm
                \sum_{j\in\setJ}\vunit{j}
            \right),
            \quad
    \setJ\subset \{1,\dots,n\}.
    \]
\end{proof}

The \emph{circular shift group} 
\[
    \Gshift{n}:=
        \{
            \Mshift{n}{i}
        :0\le i\le n-1\},
\]
    is the finite group composed of $n$ circular shift matrices
\eqref{eq:M.shift}.
Given a set of the hyperplanes through $\vzero$,
A \emph{reflection group} is 
a discrete group
generated by the reflections
w.r.t. the given hyperplanes through $\vzero$
\cite{coxeter34discrete}.

\subsection{Root System $\sysA{n}$}
The \emph{root system} $\sysA{n}$ 
is
composed of the following $n(n+1)$ \emph{roots} 
\[
    \sysA{n}=
    \{
            \vunit{i}-\vunit{j}\in\R^{n+1}:
            1\le i\ne j\le n+1
    \}
\]
embedded in the $n$-dimensional hyperplane 
\begin{equation}
\label{eq:plane.one}
    \plnone:=
    \left\{\vx\in\R^{n+1}:\vx\cdot\vone=0
    \right\}.
\end{equation}
The reflections w.r.t. the hyperplanes orthogonal
to the roots in $\sysA{n}$
form a finite reflection group called the \emph{Weyl group} $\WeylGA{n}$.

\subsection{Lattices}
Given an $m\times n$ generator matrix $\genmat$ with $m\ge n$ and $\rank(\genmat)=n$,
 the \emph{lattice} $\lattice{n}$ embedded in $\R^m$ is defined as all the integer linear combinations
 of the columns of $\genmat$:
\[
    \lattice{n}:= \genmat \Z^n=\{\genmat\vj\in\R^m:\vj\in\Z^n\}.
\]
The \emph{dual lattice} of $\lattice{n}$ is defined as
\[
    \lattice{n}^* := \{\vx\in\R^m:\vx\cdot\vy\in\Z,\forall\vy\in\lattice{n}\}.
\]
The \emph{Voronoi cell}
at $\vp\in\lattice{n}$
is defined as the set of points closest to $\vp$ among all the lattice points of $\lattice{n}$
\[
    \vor{\lattice{n}}{\vp}:=
    \{
        \vx\in\R^m:
        \|\vx-\vp\|\le\|\vx-\vq\|,
        \text{ for all }\vq\in\lattice{n}
    \}.
\]

\section{The Root Lattice $\An{n}$}

\begin{figure*}[ht]
\def\scale{.55}
\begin{center}
\subfigure[$\WeylGA{2}$]{
    \label{fig:W2}
    \includegraphics[width=.31\textwidth]{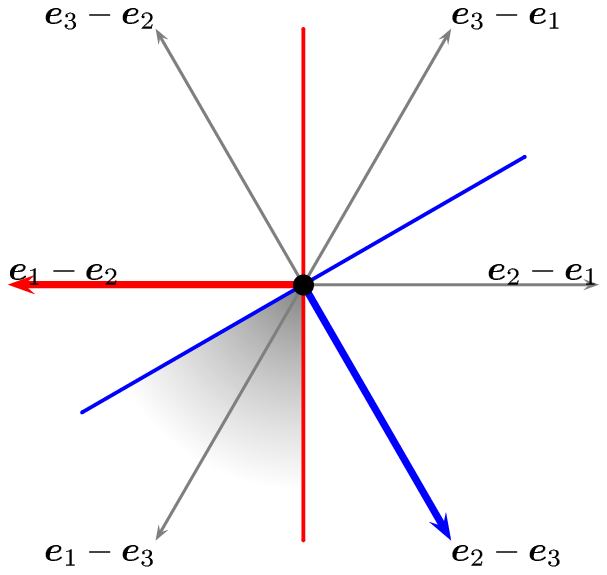}
}
\subfigure[$\aWeylGA{2}$]{
    \label{fig:Wa2}
    \includegraphics[width=.31\textwidth]{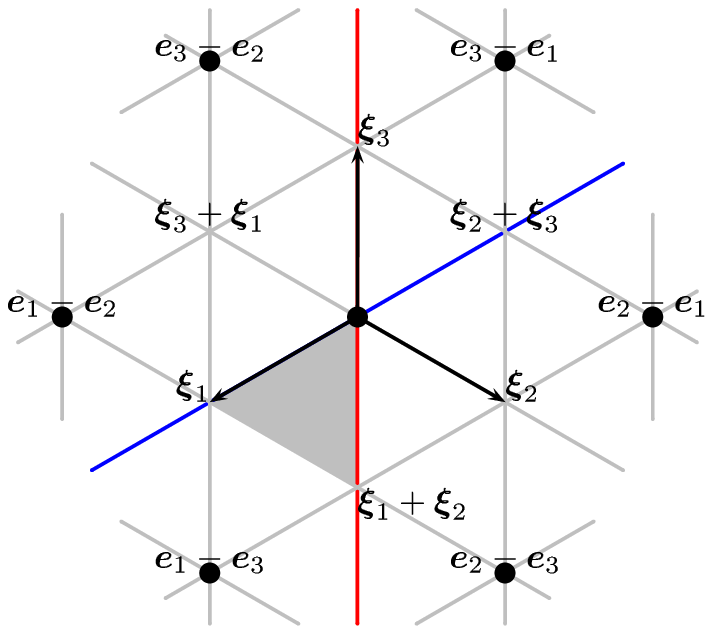}
}
\subfigure[$\vorzero$]{
    \label{fig:A2.V}
    \includegraphics[width=.31\textwidth]{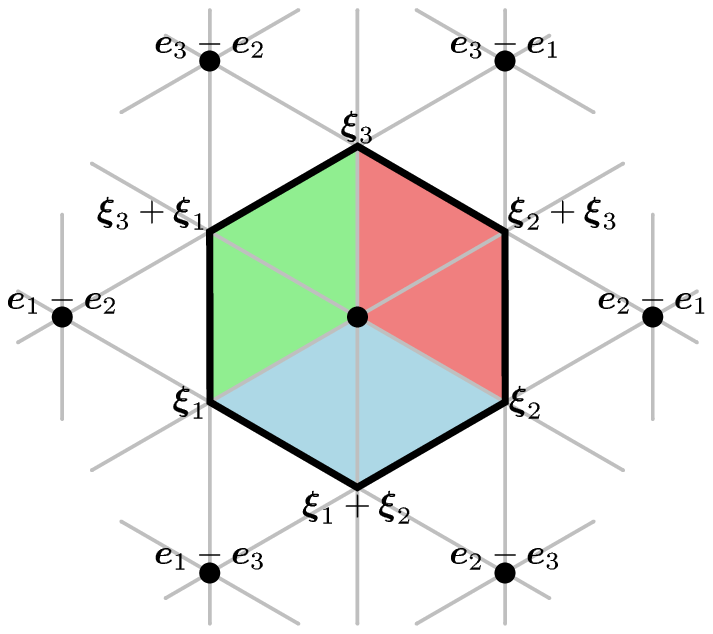}
}
\end{center}
\caption[]{
    \subref{fig:W2}
    $\WeylGA{2}$
    with the fundamental region (gray area),
    \subref{fig:Wa2}
    $\aWeylGA{2}$
    with the fundamental simplex (gray triangle),
    and
    \subref{fig:A2.V}
    $\vorzero$ 
    (thick black hexagon)
    decomposed into three rhombi,
    $\rhomb{1}$ (red),
    $\rhomb{2}$ (green),
    and
    $\rhomb{3}$ (blue).
}
\end{figure*}

\begin{figure*}[ht]
\begin{center}
\subfigure[$\WeylGA{3}$]{
    \label{fig:W3}
    \includegraphics[height=.3\textwidth]{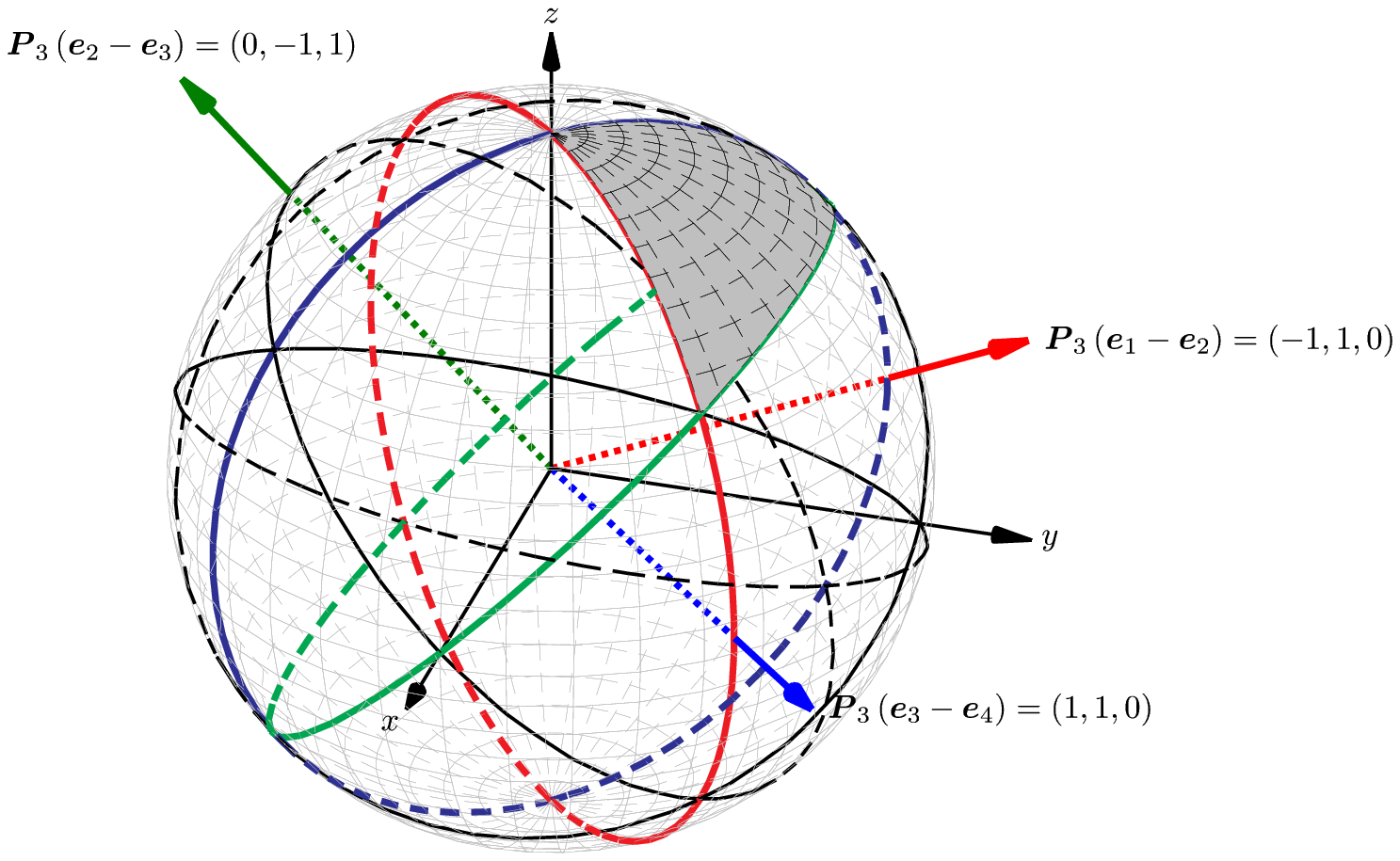}
}
\subfigure[$\aWeylGA{3}$]{
    \label{fig:Wa3}
    \includegraphics[height=.3\textwidth]{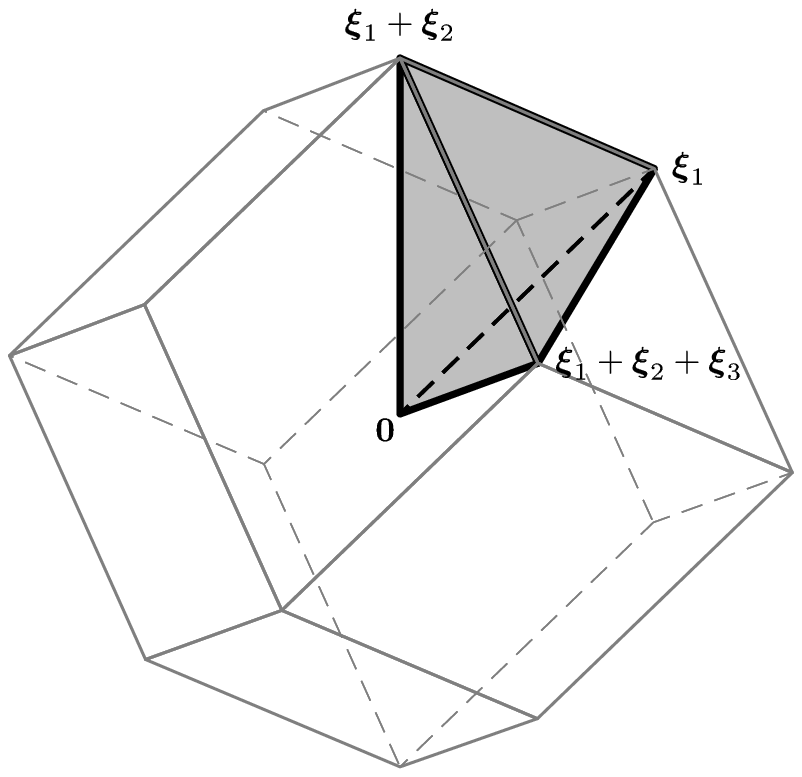}
}
\end{center}
\caption[]{
        \subref{fig:W3}
        The six hyperplanes (denoted by the great circles) associated with 
        the twelve roots of $\sysA{3}$,
        projected onto $\R^3$ by $\Mproj{3}$.
        The three colored hyperplanes
        form the fundamental region whose intersection with the
        sphere is colored in gray.
        \subref{fig:Wa3}
        The Voronoi cell (rhombic dodecahedron) of the $\An{3}$ lattice with
        the fundamental simplex of $\sysA{3}$ (gray tetrahedron).
}
\end{figure*}

In this section, we review the basic properties of the $\An{n}$ lattice
based on an explicit representation given by a generator matrix.

\subsection{Definition and Basic Properties}
The $\An{n}$ lattice  
is composed of all the integer linear combinations
of the roots of $\sysA{n}$
and usually defined 
as
\[
    \ZAn{n}:=
    \GAn{n}\Z^n
    =
    \left\{
            \vx\in\Z^{n+1}:
            \vx\cdot\vone=0
    \right\}
    =
    \Z^{n+1}\cap\plnone
\]
embedded in 
$\plnone$
\eqref{eq:plane.one}
and generated by
the $(n+1)\times n$ generator matrix 
\cite{conway98sphere}
\[
    \GAn{n}:=
    \begin{bmatrix*}[r]
        1   \\
        -1  &   1   \\
            &   -1  &   \ddots   \\
        &   &   \ddots  &   1 \\
                & &   &   -1&   1        \\
                & &   &   &   -1     \\
    \end{bmatrix*}.
\]
The $n$ 
hyperplanes
orthogonal to them
enclose the \emph{fundamental chamber}
or \emph{fundamental region} (\reffig{fig:W2} and \reffig{fig:W3})
and its images 
under the \emph{Weyl group} $\WeylGA{n}$
fill 
$\plnone$.

Let
\[
    \mXi:=
        \frac{1}{n+1}
    \begin{bmatrix*}[r]
         n   &   -1  &      \dots   &   -1 & -1 \\
        -1   &    n  &      \dots   &   -1 & -1 \\
        \vdots & \vdots & \ddots   & \vdots &\vdots\\
        -1   &   -1  &      \dots   &    n  & -1 \\
        -1   &   -1  &      \dots   &    -1  & n\\
    \end{bmatrix*}
    =\matid{n+1}-\frac{1}{n+1}\matone{n+1} 
\]
be the orthogonal projection matrix
onto $\plnone$
along $\vone$
and 
$\vxi_k$ be the $k$-th column of $\mXi$.
The dual lattice 
of $\ZAn{n}$
is defined as
$\ZAndual{n}:=\GAndual{n}\Z^n$
where 
\cite{conway98sphere}
\[
\GAndual{n}:=
\begin{bmatrix}
    \vxi_1  &   \cdots  &   \vxi_n
\end{bmatrix}.
\]

We can construct both $\An{n}$ and $\Andual{n}$ lattices in $\R^n$  
using any $n\times (n+1)$ orthogonal transformation that maps $\plnone$ to $\R^n$
and
one of such matrices is
\cite{kim11symmetric}
\[
    \Mproj{n}:=
    \left(
        \matid{n} - \frac{1}{n}
            \left(
                1+\frac{1}{\sqrt{n+1}}
            \right)\matone{n}   
    \right)
    \begin{bmatrix}
        -\matid{n}  &   \vone
    \end{bmatrix}.
\]
For example, 
with
\[
    \Mproj{3}=\frac{1}{2}
        \begin{bmatrix*}[r]
            -1 &  1 &  1 & -1  \\
             1 & -1 &  1 & -1  \\
             1 &  1 & -1 & -1
        \end{bmatrix*}
\]
we get
the following square generator matrices for 
the $\An{3}$ and $\Andual{3}$ lattices
which are equivalent to the FCC (Face-Centered Cubic)
and BCC (Body-Centered Cubic) lattices, respectively.:
\[
        \Mproj{3}\GAn{3}
        =
        \begin{bmatrix*}[r]
            -1 &  0 &  1  \\
             1 & -1 &  1  \\
             0 &  1 &  0
        \end{bmatrix*}
        \text{ and }
        \Mproj{3}\GAndual{3}
        =
        \frac{1}{2}
        \begin{bmatrix*}[r]
            -1 &  1 &  1  \\
             1 & -1 &  1  \\
             1 &  1 &  -1
        \end{bmatrix*}.
\]
Note that
\[
    \left(\Mproj{3}\GAn{3}\right)\Z^3=\Gfcc\Z^3
    \text{ and }
    \Mproj{3}\GAndual{3}=\Gbcc
\]
where
\[
    \Gfcc:=
    \begin{bmatrix}
        0   &   1   &   1   \\
        1   &   0   &   1   \\
        1   &   1   &   0   \\
    \end{bmatrix}
    \text{ and }
    \Gbcc:=
    \frac{1}{2}
    \begin{bmatrix*}[r]
        -1   &   1   &   1   \\
        1   &   -1   &   1   \\
        1   &   1   &   -1   \\
    \end{bmatrix*}
\]
are the well-known symmetric generator matrices for 
the FCC and BCC lattices,
respectively.

We review a well-known fact of the $\An{n}$ lattice.
\begin{lemma}
The reflection group
generated by the reflections 
w.r.t. the hyperplanes orthogonal to the  roots of $\sysA{n}$
is 
isomorphic to
the symmetric group $\Gsym{n+1}$.
\end{lemma}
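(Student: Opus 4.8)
The plan is to exhibit an explicit isomorphism between the reflection group $\WeylGA{n}$ acting on $\plnone$ and the symmetric group $\Gsym{n+1}$ acting on $\R^{n+1}$ by permutation of coordinates. First I would recall that each root $\vunit{i}-\vunit{j}\in\sysA{n}$ determines the reflection $s_{ij}$ in the hyperplane $\{\vx:(\vunit{i}-\vunit{j})\cdot\vx=0\}$, and that this reflection has the closed form
\[
    s_{ij}(\vx)=\vx-\frac{2\,(\vunit{i}-\vunit{j})\cdot\vx}{(\vunit{i}-\vunit{j})\cdot(\vunit{i}-\vunit{j})}\,(\vunit{i}-\vunit{j})
    =\vx-\bigl((\vunit{i}-\vunit{j})\cdot\vx\bigr)(\vunit{i}-\vunit{j}).
\]
The key computation is to observe that this is precisely the linear map that swaps the $i$-th and $j$-th coordinates of $\vx$, i.e. $s_{ij}=\Mswitch{i}{j}$ acting on $\R^{n+1}$: one checks on the standard basis that $s_{ij}\vunit{i}=\vunit{j}$, $s_{ij}\vunit{j}=\vunit{i}$, and $s_{ij}\vunit{k}=\vunit{k}$ for $k\notin\{i,j\}$, which is the definition of the transposition matrix $\Mswitch{i}{j}$.

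Next I would note that each $\Mswitch{i}{j}$ preserves $\plnone$, since permuting coordinates preserves the sum $\vx\cdot\vone$; hence the reflection group, generated by the $s_{ij}$, acts on $\plnone$ and is a well-defined subgroup of the group of coordinate permutations of $\R^{n+1}$, which is exactly $\Gsym{n+1}$. Conversely, since every permutation in $\Gsym{n+1}$ is a product of transpositions $(i\,j)$, and each such transposition equals $\Mswitch{i}{j}=s_{ij}$, the reflection group contains all of $\Gsym{n+1}$. Therefore the reflection group equals $\Gsym{n+1}$ as a matrix group acting on $\R^{n+1}$, and restricting this faithful action to the invariant subspace $\plnone$ (on which $\Gsym{n+1}$ still acts faithfully, since only $\vone$ and its multiples are fixed by all permutations and $\vone\notin\plnone$) gives the claimed isomorphism $\WeylGA{n}\cong\Gsym{n+1}$.

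The main obstacle — really the only nontrivial point — is the identification of the root reflection $s_{ij}$ with the coordinate transposition $\Mswitch{i}{j}$; everything else is the standard fact that transpositions generate the symmetric group together with the bookkeeping that the action descends faithfully to $\plnone$. One should take mild care that $\|\vunit{i}-\vunit{j}\|^2=2$ so that the factor of $2$ in the reflection formula cancels cleanly, which is what makes the reflection come out to be exactly the permutation matrix with no extra scaling.
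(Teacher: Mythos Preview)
Your proposal is correct and follows essentially the same approach as the paper: compute the reflection in the hyperplane orthogonal to $\vunit{i}-\vunit{j}$, identify it with the coordinate transposition $\Mswitch{i}{j}$, and then invoke the fact that transpositions generate $\Gsym{n+1}$. The paper verifies $s_{ij}=\Mswitch{i}{j}$ by writing out the coordinates of $s_{ij}(\vx)$ for a general $\vx$, whereas you check it on the standard basis, but this is the same computation; your additional remarks about the action preserving and being faithful on $\plnone$ are a small elaboration that the paper omits.
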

\begin{proof}
Let
\begin{align*}
    \vy:=
    \vx-2\frac{\vx\cdot(\vunit{i}-\vunit{j})}{\|\vunit{i}-\vunit{j}\|^2}(\vunit{i}-\vunit{j})
    =
    \vx-(\vx(i)-\vx(j))(\vunit{i}-\vunit{j})
\end{align*}
be the reflected image of $\vx$ w.r.t. the hyperplane orthogonal to $\vunit{i}-\vunit{j}$.
Then
\[
    \vy(k)=
    \begin{cases}
        \vx(i)-\vx(i)+\vx(j)=\vx(j)  &   \text{if }k=i \\
        \vx(j)+\vx(i)-\vx(j)=\vx(i)  &   \text{if }k=j \\
        \vx(k)  &   \text{otherwise}.
    \end{cases}
\]
and therefore $\vy=\Mswitch{i}{j}\vx$.
The claim holds since $\Gsym{n+1}$
can be generated by row-exchange matrices.
\end{proof}

\section{Decomposition of the Voronoi Cell into Congruent Simplices}

In this and the following sections we algebraically analyze the detailed 
structure of the Voronoi cell of the $\An{n}$ lattice.
Since all the Voronoi cells are congruent, we only consider
$\vorzero$ at the origin.

If we add the hyperplane
$(\vunit{1}-\vunit{n+1})\cdot\vx=1$
to the hyperplanes (orthogonal to the roots) of $\sysA{n}$,
we obtain an infinite reflection group
called the \emph{affine Weyl group} $\aWeylGA{n}$
    (\reffig{fig:W2}).
The simplex built by `capping' the fundamental region
of $\sysA{n}$ with the additional hyperplane is called
the \emph{fundamental simplex} (or \emph{alcove}) of $\sysA{n}$
\cite{conway98sphere}.

We start with
the following lemma 
by \citet{conway82voronoi}
that shows $\vorzero$ is composed of 
$(n+1)!$ copies of the fundamental simplex
of $\aWeylGA{n}$.
\begin{lemma}
\label{lem:V.decomp.simplex}
    Let
    (Refer to \reflem{lem:kuhn} for $\vu_j$.)
    \begin{equation}
    \label{eq:verts.simplex.0}
    \setV_0:=\left\{
                \vv_j:=
                    \mXi\vu_j
                =
                \vzero+
            \sum_{i=1}^j \vxi_i
            =
        \left(
            \left(
                \frac{n+1-j}{n+1}
            \right)^{j},
            \left(
                -
                \frac{j}{n+1}
            \right)^{n+1-j}
        \right):
            0\le j\le n\right\}.
    \end{equation}
    $\vorzero$
    is the 
    union of the $(n+1)!$ images 
    of the
    fundamental simplex
    \begin{equation}
    \label{eq:simplex.0}
    \simplex_0
    :=
    \CH{\setV_0}
    \end{equation}
    under the symmetric group $\Gsym{n+1}$.
    In other words,
    \[
        \vorzero
        =
        \bigsqcup_{\mathclap{\Mperm\in\Gsym{n+1}}}
        \Mperm\simplex_0.
    \]
    The simplicial face
    $
        \CH{
    \{\vv_1,\dots,\vv_n\}
        }
        $
    is
    called the \emph{roof} of 
        $\simplex_0$. 
    
    Refer to
        \reffig{fig:Wa2} and \reffig{fig:Wa3}
        for $n=2$ and $n=3$, respectively.
\end{lemma}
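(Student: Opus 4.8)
\emph{Proof proposal.}\ Work throughout inside the hyperplane $\plnone$, the affine hull of $\ZAn{n}$; this loses nothing, since the defining inequalities of $\vorzero$ only constrain the component of a point along $\plnone$, and there $\vorzero$ is an $n$-dimensional convex body. The plan is to verify three facts about the family $\{\Mperm\simplex_0:\Mperm\in\Gsym{n+1}\}$: (a) each $\Mperm\simplex_0$ is contained in $\vorzero$; (b) distinct members have disjoint interiors in $\plnone$; and (c) $\sum_{\Mperm}\vol(\Mperm\simplex_0)=\vol(\vorzero)$. These suffice: $K:=\bigcup_{\Mperm}\Mperm\simplex_0$ is then a closed subset of $\vorzero$ with $\vol(\vorzero\setminus K)=0$, so $K\supseteq\mathrm{int}\,\vorzero$, and since $\vorzero$ is the closure of its interior, $K=\vorzero$; by (b) the union is disjoint in the sense of \reflem{lem:kuhn}.

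For (a), use the \emph{spread} $s(\vx):=\max_k\vx(k)-\min_k\vx(k)=\max_{k,\ell}(\vx(k)-\vx(\ell))$, which is convex as a maximum of linear functionals. The coordinates in \eqref{eq:verts.simplex.0} give $s(\vv_j)\le 1$ for each $j$, hence $s\le 1$ on $\simplex_0=\CH{\setV_0}$. Now fix $\vx\in\simplex_0$ and $\vq\in\ZAn{n}$, and put $m:=\min_k\vx(k)$, so $0\le\vx(k)-m\le 1$. Using $\sum_k\vq(k)=0$, then discarding the terms with $\vq(k)\le 0$, then $\vx(k)-m\le 1$, and finally $|\vq(k)|\le\vq(k)^2$ for integers,
\[
    \vx\cdot\vq=\sum_k(\vx(k)-m)\,\vq(k)\;\le\sum_{k:\,\vq(k)>0}\vq(k)\;=\;\tfrac12\sum_k|\vq(k)|\;\le\;\tfrac12\sum_k\vq(k)^2=\tfrac12\|\vq\|^2 .
\]
Thus $2\vx\cdot\vq\le\|\vq\|^2$, i.e.\ $\|\vx\|\le\|\vx-\vq\|$, so $\simplex_0\subseteq\vorzero$. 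Each $\Mperm\in\Gsym{n+1}$ is an isometry fixing $\vzero$ with $\Mperm\ZAn{n}=\ZAn{n}$, so $\Mperm\simplex_0\subseteq\vorzero$ as well.

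For (b), I would first identify $\simplex_0$ explicitly as the closed alcove
\[
    \simplex_0=\{\vx\in\plnone:\vx(1)\ge\vx(2)\ge\cdots\ge\vx(n+1),\ \vx(1)-\vx(n+1)\le 1\},
\]
which is a fundamental alcove of $\aWeylGA{n}$ (the simple-root chamber capped by the affine wall $\vx(1)-\vx(n+1)=1$). The inclusion $\subseteq$ is immediate from \eqref{eq:verts.simplex.0}. For $\supseteq$, observe that the $n+1$ functionals $\vx\mapsto\vx(k)-\vx(k+1)$ $(1\le k\le n)$ together with $\vx\mapsto 1-\vx(1)+\vx(n+1)$ are nonnegative on the right-hand region and sum identically to $1$; since the first $n$ already form a linear coordinate system on $\plnone$, the region is an affine image of the standard $n$-simplex, and taking preimages of the standard basis vectors recovers $\vv_0,\dots,\vv_n$ as its vertices. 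Hence $\mathrm{int}\,\simplex_0$ consists of the $\vx\in\plnone$ with strictly decreasing coordinates and $\vx(1)-\vx(n+1)<1$; an interior point of $\Mperm\simplex_0$ then has pairwise distinct coordinates whose decreasing rearrangement occupies the positions $\perm(1),\dots,\perm(n+1)$, which pins down $\perm$ and proves (b). (Equivalently: $\mXi=\matid{n+1}-\tfrac1{n+1}\matone{n+1}$ commutes with permutation matrices and kills $\vone=\vu_{n+1}$, so applying $\mXi$ to the Kuhn triangulation $\cube{n+1}{}=\bigsqcup_{\Mperm}\Mperm\simplexCube_0$ of \reflem{lem:kuhn} in dimension $n+1$ gives $\mXi\cube{n+1}{}=\bigcup_{\Mperm}\Mperm\simplex_0$; two cube points with equal $\mXi$-image differ by a multiple of $\vone$, hence share a coordinate order, hence lie in Kuhn simplices with the same index.)

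For (c), compute $\vol(\simplex_0)$ from the edges $\vv_j=\vxi_1+\cdots+\vxi_j$ issuing from $\vv_0=\vzero$: it equals $\tfrac1{n!}$ times the $n$-volume of the parallelepiped on $\vxi_1,\dots,\vxi_n$, whose Gram matrix is $(\vxi_i\cdot\vxi_j)=\matid{n}-\tfrac1{n+1}\matone{n}$ (because $\mXi$ is a symmetric idempotent), of determinant $\tfrac1{n+1}$; hence $\vol(\simplex_0)=\tfrac1{n!\sqrt{n+1}}$ and $\sum_{\Mperm}\vol(\Mperm\simplex_0)=(n+1)!\,\vol(\simplex_0)=\sqrt{n+1}$, which matches $\vol(\vorzero)=\sqrt{\det(\trans{\GAn{n}}\GAn{n})}=\sqrt{n+1}$, the determinant of the $\sysA{n}$ Cartan matrix. (One may skip the volume count altogether by invoking that the only relevant vectors of $\ZAn{n}$ are its roots, giving $\vorzero=\{\vx\in\plnone:s(\vx)\le 1\}$, which the substitution $\vx\mapsto\vx-(\min_k\vx(k))\vone$ identifies with $\mXi\cube{n+1}{}=\bigcup_{\Mperm}\Mperm\simplex_0$.) With (a)--(c) the decomposition follows; the \emph{roof} of $\simplex_0$ is then simply its facet opposite $\vv_0=\vzero$, the one on which $\vx(1)-\vx(n+1)=1$. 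The main obstacle I anticipate is the alcove identification in (b) — proving that $\CH{\setV_0}$ is exactly the sorted region, equivalently that the $\vv_j$ exhaust its vertices; the ``functionals summing to $1$'' observation is what makes this clean, after which the disjointness, the Kuhn link, and steps (a) and (c) are routine.
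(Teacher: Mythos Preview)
Your argument is correct. The spread bound in (a) is a clean way to verify the Voronoi inequalities against every lattice vector at once; the alcove identification in (b) via the $n+1$ barycentric-type functionals summing to $1$ is the right mechanism and does pin down the vertices as $\vv_0,\dots,\vv_n$; and the volume match in (c) is computed correctly (the edge vectors $\vv_j-\vv_0=\sum_{i\le j}\vxi_i$ are related to $\vxi_1,\dots,\vxi_n$ by a unitriangular change, so the Gram determinant $\det(\matid{n}-\tfrac1{n+1}\matone{n})=\tfrac1{n+1}$ gives the simplex volume you state).

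The route, however, is quite different from the paper's. The paper does not prove this lemma at all: it simply cites \citet[III.B]{conway82voronoi}, where the result is obtained from the general theory that the Voronoi cell of a root lattice is the orbit of the fundamental alcove under the finite Weyl group. Your proof is fully self-contained and elementary: it replaces the appeal to affine Weyl group machinery by the explicit ``sorted-coordinates'' description of the alcove, a direct Voronoi-inequality check using integrality, and a volume count. What the paper's approach buys is brevity and a uniform statement across all root lattices; what yours buys is that a reader unfamiliar with Coxeter theory can follow every step, which is precisely the spirit the paper advertises in its abstract. Your parenthetical alternatives (the Kuhn-triangulation transport via $\mXi$, and the shortcut through the relevant-vector description $\vorzero=\{s(\vx)\le1\}$) are also correct and in fact anticipate later sections of the paper.
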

\begin{proof}
Refer to 
\citet[III.B]{conway82voronoi}.
\end{proof}

Therefore, 
the vertices of $\vorzero$ is the union
of $(n+1)!$ images of $\setV_0$ excluding $\vzero$, i.e.,
    \[
        \bigcup_{i=1}^{n}
        \left\{
                \sum_{ \vzeta\in\mZeta}
                \vzeta:
                \mZeta\subset \mXi,
                \#\mZeta=i
        \right\}
    \]
    and the number of vertices of $\vorzero$ is
\cite{conway91cell}
    \[
        \sum_{i=1}^{n}
        \binom{n+1}{i}
        =
        2^{n+1}-2.
    \]
For example, 
(Note that $\sum_{i=1}^{n+1}\vxi_i=\vzero$.)
for $n=2$,
the six
vertices are
(\reffig{fig:A2.V})
\[
    \left\{
            \pm\vxi_1,
            \pm\vxi_2,
            \pm\vxi_3
    \right\}
\]
and for $n=3$,
the $14$ vertices 
are
    (\reffig{fig:Wa3})
\[
    \left\{
            \pm\vxi_1,
            \pm\vxi_2,
            \pm\vxi_3,
            \pm\vxi_4,
            \pm(\vxi_1+\vxi_2),
            \pm(\vxi_1+\vxi_3),
            \pm(\vxi_1+\vxi_4)
    \right\}.
\]

\section{Decomposition of the Voronoi Cell  into Congruent Hyper-Rhombi}
\label{sec:V.decomp.rhombi}
In this section we show that $\vorzero$
is composed of $(n+1)$ congruent `hyper-rhombi',
a generalization of rhombi
or
\emph{trigonal trapezohedra}
(\emph{rhombohedra} with six congruent rhombic faces).
\begin{definition}
An $n$-dimensional `hyper-rhombus' is defined
as a parallelepiped all of whose faces 
are congruent $(n-1)$-dimensional hyper-rhombi.
This definition is recursive with its base case of $2$-dimensional rhombi
(\reffig{fig:A3.V.rhomb}).
\end{definition}

Let
\[
    \mXi_{\setJ}
    :=
    \mXi\backslash
    \{\vxi_j:j\in\setJ\}
\]
be the set of column vectors of $\mXi$ 
excluding those associated with the indices in $\setJ$.
To avoid notational clutter, we denote
$
    \mXi_{\{i\}}=\mXi_i$ and
    $
    \mXi_{\{i,j\}}=\mXi_{ij}$,
    etc.
Then,
for $\setJ\ne \emptyset$,
\[
    \rhomb{\setJ}:=
    \mXi_{\setJ}[0,1]^{n+1-\#\setJ}
    =
    \left\{\sum_{\mathclap{\substack{i=1\\i\notin\setJ}}}^{n+1}
    t_i
    \vxi_i 
    :
    0\le t_i\le 1\right\}
\]
is
the 
$(n+1-\#\setJ)$-dimensional
parallelepiped spanned by
the directions in $\mXi_{\setJ}$ 
since any $n$ directions of $\mXi$ are linearly independent.

\begin{lemma}
\label{lem:rhombi}
The $n$-dimensional  parallelepipeds
$\left\{\rhomb{j}:1\le j\le n+1\right\}$
are hyper-rhombi.
\end{lemma}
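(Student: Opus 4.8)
The plan is to prove the claim by induction on $n$, matching the recursive structure of the definition of hyper-rhombi. The base case is $n=2$: here each $\rhomb{j}$ is the parallelepiped spanned by two of the three vectors $\vxi_1,\vxi_2,\vxi_3$ (with $\vxi_1+\vxi_2+\vxi_3=\vzero$), and by the symmetry of $\mXi$ under $\Gsym{3}$ — which permutes the columns $\vxi_i$ — all three spanning pairs have the same Gram matrix, so each $\rhomb{j}$ is a rhombus (all its edges have the common length $\|\vxi_i\|$, which is independent of $i$ since $\mXi$ is invariant under coordinate permutations). This settles $n=2$.

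For the inductive step, fix $n\ge 3$ and some $j\in\{1,\dots,n+1\}$. By definition $\rhomb{j}=\mXi_j[0,1]^n$ is the $n$-dimensional parallelepiped spanned by the $n$ vectors $\{\vxi_i:i\ne j\}$. I must check two things: (i) it is a parallelepiped all of whose $(n-1)$-faces are congruent, and (ii) each such face is itself an $(n-1)$-dimensional hyper-rhombus, so that the inductive hypothesis applies. For (i), note that each facet of $\rhomb{j}$ is obtained by deleting one generator $\vxi_k$ (with $k\ne j$) and fixing the corresponding coordinate $t_k$ at $0$ or $1$; such a facet is a translate of the $(n-1)$-parallelepiped spanned by $\{\vxi_i:i\ne j,\,i\ne k\}$. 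The key observation is that $\mXi=\matid{n+1}-\frac{1}{n+1}\matone{n+1}$ is invariant under conjugation by any permutation matrix $\Mperm\in\Gsym{n+1}$, i.e. $\Mperm\mXi\trans{\Mperm}=\mXi$, hence $\Mperm$ permutes the columns $\vxi_i$ while being an isometry of $\plnone$. Given two facets determined by deleted index pairs $\{j,k\}$ and $\{j,k'\}$, pick $\Mperm$ fixing $j$ and sending $k$ to $k'$ (and permuting the remaining indices arbitrarily); this $\Mperm$ is an isometry carrying the spanning set $\{\vxi_i:i\ne j,k\}$ onto $\{\vxi_i:i\ne j,k'\}$, hence carrying one facet onto (a translate of) the other. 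So all facets of $\rhomb{j}$ are congruent.

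For (ii), the facet of $\rhomb{j}$ with deleted pair $\{j,k\}$ is a translate of the $(n-1)$-parallelepiped spanned by $\{\vxi_i:i\ne j,k\}$. I would like to recognize this as an $(n-1)$-dimensional hyper-rhombus of the type appearing in the $\An{n-1}$ analysis. This is exactly where the main subtlety lies: the vectors $\vxi_i\in\R^{n+1}$ are the columns of the order-$(n+1)$ projection $\mXi$, whereas the inductive hypothesis concerns the analogous columns of the order-$n$ projection matrix. The bridge is that deleting one further coordinate (the $j$-th, say) and restricting to the remaining $n$ coordinates identifies the span of $\{\vxi_i:i\ne j,k\}$, up to an invertible linear map, with the span of $n-1$ of the columns of the order-$n$ projection matrix — essentially because $\mXi$ restricted to any coordinate-hyperplane again has the form $\alpha\matid{} + \beta\matone{}$. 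One then invokes Corollary~\ref{cor:kuhn} (or rather its geometry) to see that an invertible linear image of a hyper-rhombus, when the image preserves the relevant congruences among edges, is again a hyper-rhombus; more carefully, one shows directly that the Gram matrix of $\{\vxi_i:i\ne j,k\}$ is, like that of any $(n-1)$-subset of columns of a matrix of the form $\alpha\matid{}+\beta\matone{}$, of the same shape, so that it is isometric to the generating set of $\rhomb{k'}$ for the $\An{n-1}$ lattice. Thus the facet is an $(n-1)$-dimensional hyper-rhombus by the inductive hypothesis, completing the induction.

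The main obstacle is this last identification in step (ii): making precise, and dimension-consistent, the claim that a facet of an $\An{n}$-hyper-rhombus "is" an $\An{n-1}$-hyper-rhombus, given that the ambient projection matrices have different sizes. I expect the cleanest route is to argue entirely at the level of Gram matrices: show that for $\mXi=\matid{m}-\frac{1}{m}\matone{m}$, the Gram matrix of any $k$ columns has the form $a\matid{k}+b\matone{k}$ with $a,b$ depending only on $m$ and $k$; deduce that any two $k$-subsets of columns of $\mXi$ are isometric; and conclude that "being a hyper-rhombus" depends only on $k$ and this Gram shape, which is then stable under passing to facets. Everything else — the congruence of facets, the base case — follows from the permutation-invariance of $\mXi$ and is routine.
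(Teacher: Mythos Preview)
Your outline contains the right ingredients, but the induction is mis-framed. The inductive hypothesis you invoke is about the hyper-rhombi of the $\An{n-1}$ lattice, and in step~(ii) you assert that a facet of $\rhomb{j}$ (for $\An{n}$) ``is isometric to the generating set of $\rhomb{k'}$ for the $\An{n-1}$ lattice.'' This is false: the Gram matrix of $\{\vxi_i:i\ne j,k\}$ has diagonal entries $n/(n+1)$ and off-diagonal entries $-1/(n+1)$, whereas the corresponding Gram matrix for $\An{n-1}$ has entries $(n-1)/n$ and $-1/n$. These are not even proportional, so the facets are neither isometric nor similar to $\An{n-1}$ hyper-rhombi, and the cross-dimensional inductive hypothesis cannot be invoked.

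The repair you sketch in your last paragraph --- arguing that any $k$ columns of $\mXi$ have Gram matrix of the form $a\,\matid{k}+b\,\matone{k}$, and that this shape is preserved when one column is deleted --- is correct, but notice it is an induction on the \emph{face dimension} $k$ for fixed $n$, not an induction on $n$. That is exactly the paper's proof: stay inside the single matrix $\mXi$ and descend through face dimensions. At each level, congruence of faces follows because the row-exchange isometries $\Mswitch{1}{i}\Mswitch{2}{j}$ carry $\mXi_{1,2}$ onto $\mXi_{i,j}$ (as multi-sets of columns), and the base case $k=2$ is the direct computation $\vxi_\alpha\cdot\vxi_\beta = n/(n+1)$ or $-1/(n+1)$ according as $\alpha=\beta$ or not. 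Once you drop the attempted link to $\An{n-1}$, your argument collapses to precisely this one.
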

\begin{proof}
Note that the $n$ faces of $\rhomb{j}$ adjacent to $\vzero$
are
\[
    \left\{
    \rhomb{ij}
    : 1\le i\ne j\le n+1
    \right\}.
\]
They are all congruent
with $\rhomb{1,2}$
since
(with $\mXi_{ij}$ viewed as a multi-set)
\[
    \mXi_{ij}=
    \Mswitch{1}{i}
    \Mswitch{2}{j}
    \mXi_{1,2}
\]
and therefore are all congruent.
Similarly, all the  $(n-2)$-dimensional faces of 
$\rhomb{ij}$ adjacent to $\vzero$,
\[
    \left\{\rhomb{ijk}:1\le k\ne i,j\le n+1\right\},
\]
are also congruent parallelepipeds.
Finally,
all
the $2$-dimensional faces spanned by
two columns of $\mXi$ are rhombi since
\[
    \vxi_\alpha\cdot\vxi_\beta
    =
    \begin{cases}
        n/(n+1) &   \alpha=\beta \\
        -1/(n+1)    &   \alpha\ne \beta
    \end{cases}
\]
and
therefore the claim holds.
\end{proof}
From the above lemma, we can deduce the following Corollary.
\begin{corollary}
\label{cor:Xi.rhomb}
    For any $\mXi_{\setJ}$ where $\setJ\subset\{1,\dots,n+1\}$ and $2\le \#\setJ \le n$,
    $
        \rhomb{\setJ}
        $
        is a hyper-rhombus.
\end{corollary}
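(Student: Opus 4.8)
The plan is to run the argument of \reflem{lem:rhombi} once more, this time as an induction on $\#\setJ$. Two ingredients from that proof will be reused. First, $\rhomb{\setJ}$ is the parallelepiped generated by the vectors $\{\vxi_i:i\notin\setJ\}$, and these are linearly independent because there are $n+1-\#\setJ\le n$ of them and any $n$ columns of $\mXi$ are independent. Second, every permutation matrix $\Mperm\in\Gsym{n+1}$ is an isometry of $\plnone$ and commutes with $\mXi$, since $\Mperm\matone{n+1}\trans{\Mperm}=\matone{n+1}$; hence $\Mperm\vxi_j=\vxi_{\perm(j)}$ and therefore $\Mperm\rhomb{\setJ}=\rhomb{\perm(\setJ)}$, so $\rhomb{\setJ}$ is congruent to $\rhomb{\setJ'}$ whenever $\#\setJ=\#\setJ'$.

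First I would describe the facets of $\rhomb{\setJ}$. Being a parallelepiped with generators $\{\vxi_i:i\notin\setJ\}$, its facets are exactly the $2(n+1-\#\setJ)$ sets $\rhomb{\setJ\cup\{i\}}$ and $\rhomb{\setJ\cup\{i\}}+\vxi_i$, one pair for each $i\notin\setJ$. A set and its translate are congruent; and for $i,i'\notin\setJ$ the transposition $(i\ i')$ fixes $\setJ$ and maps $\setJ\cup\{i\}$ to $\setJ\cup\{i'\}$, so the row-exchange matrix $\Mswitch{i}{i'}$ carries $\rhomb{\setJ\cup\{i\}}$ onto $\rhomb{\setJ\cup\{i'\}}$ by the previous paragraph. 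Consequently all facets of $\rhomb{\setJ}$ are mutually congruent.

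Then I would induct downward on $\#\setJ$. For the base case $\#\setJ=n-1$, $\rhomb{\setJ}$ is the parallelogram spanned by two vectors $\vxi_i,\vxi_{i'}$ of equal length $\sqrt{n/(n+1)}$ (the inner-product table in \reflem{lem:rhombi}), hence a rhombus. For the step, take $2\le\#\setJ\le n-2$; then $\rhomb{\setJ}$ has dimension $n+1-\#\setJ\ge3$, its facets are all congruent to some $\rhomb{\setJ\cup\{i\}}$ with $\#(\setJ\cup\{i\})=\#\setJ+1\le n-1$, and the induction hypothesis makes the latter an $(n-\#\setJ)$-dimensional hyper-rhombus. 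So $\rhomb{\setJ}$ is a parallelepiped whose facets are congruent hyper-rhombi of one dimension less, i.e.\ a hyper-rhombus. The remaining boundary case $\#\setJ=n$ gives the one-dimensional segment $[0,1]\,\vxi_i$, which one may regard as a degenerate hyper-rhombus or simply set aside.

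I do not anticipate a genuine obstacle: the statement is a direct extension of \reflem{lem:rhombi}, and the proof is essentially the same with the recursion made explicit. The only steps needing a little care are the elementary bookkeeping of the facets of a parallelepiped and the check that the permutation matrix used in the symmetry reduction really preserves both the metric and $\mXi$ — both of which follow at once from $\Mperm\matone{n+1}\trans{\Mperm}=\matone{n+1}$.
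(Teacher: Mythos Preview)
Your proposal is correct and follows essentially the same route as the paper: the corollary is stated there without a separate proof, as an immediate consequence of \reflem{lem:rhombi}, and your explicit downward induction on $\#\setJ$ simply spells out the recursion implicit in that lemma's argument (congruence of facets via row-exchange matrices, terminating at the two-dimensional rhombus). Your remark on the boundary case $\#\setJ=n$ is also apt, since the paper's recursive definition of a hyper-rhombus bottoms out at dimension two.
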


Next we show that $\vorzero$ is the disjoint union of
$(n+1)$ congruent hyper-rhombi.
\begin{figure*}[ht]
\def\scale{.7}
\begin{center}
\subfigure[]{
    \label{fig:A3.V.rhomb.split}
\includegraphics[height=.27\textwidth]{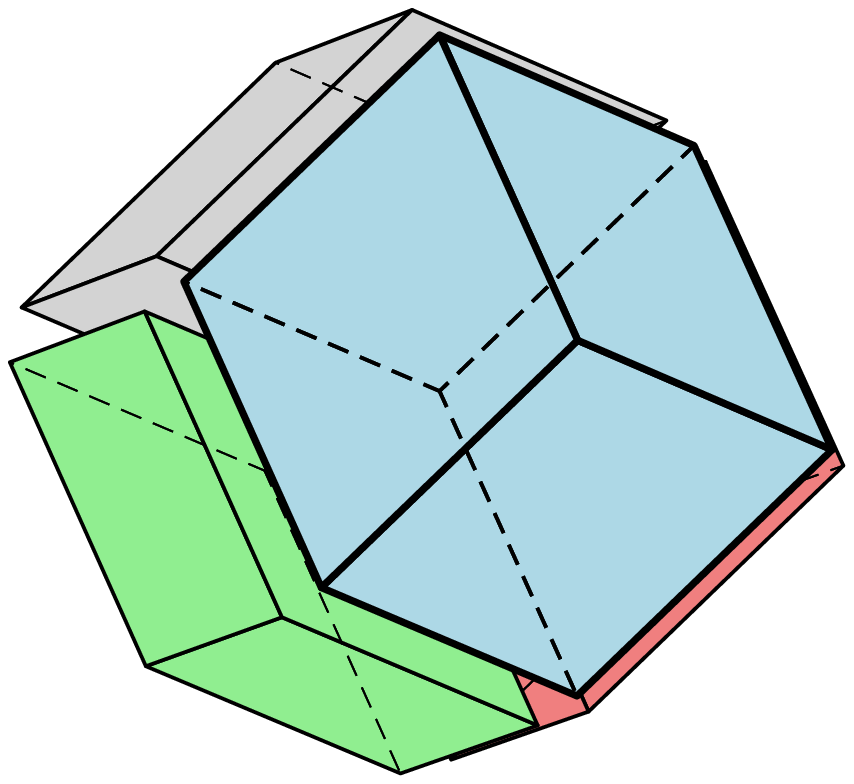} 
}
\subfigure[]{
    \label{fig:A3.V.rhomb}
\includegraphics[height=.27\textwidth]{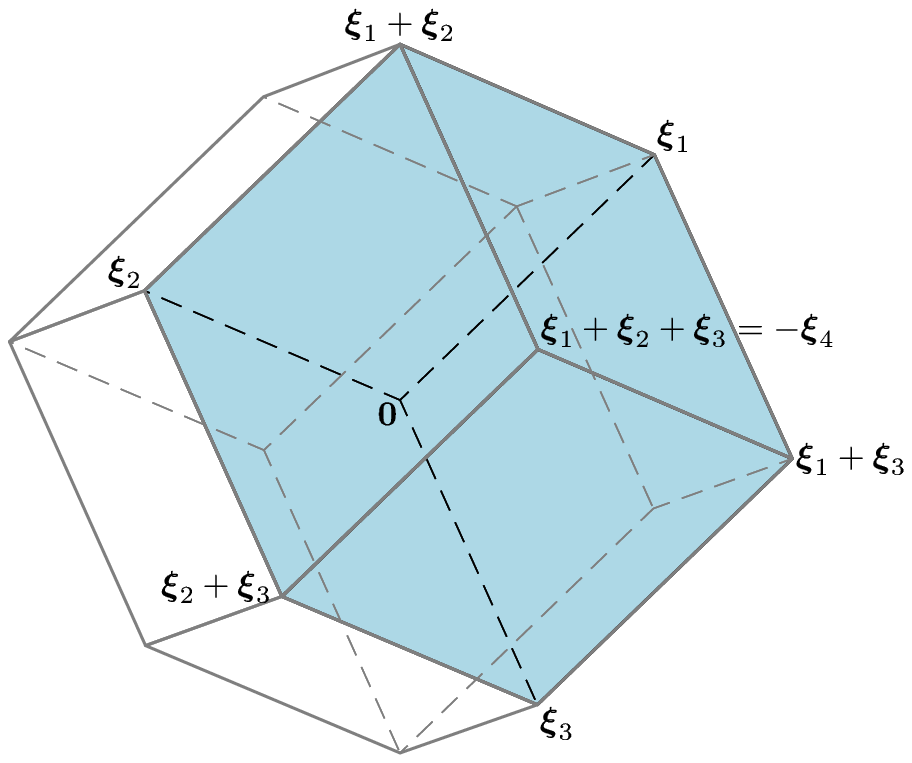} 
}
\\
\subfigure[]{
    \label{fig:A3.V.tets}
\includegraphics[height=.27\textwidth]{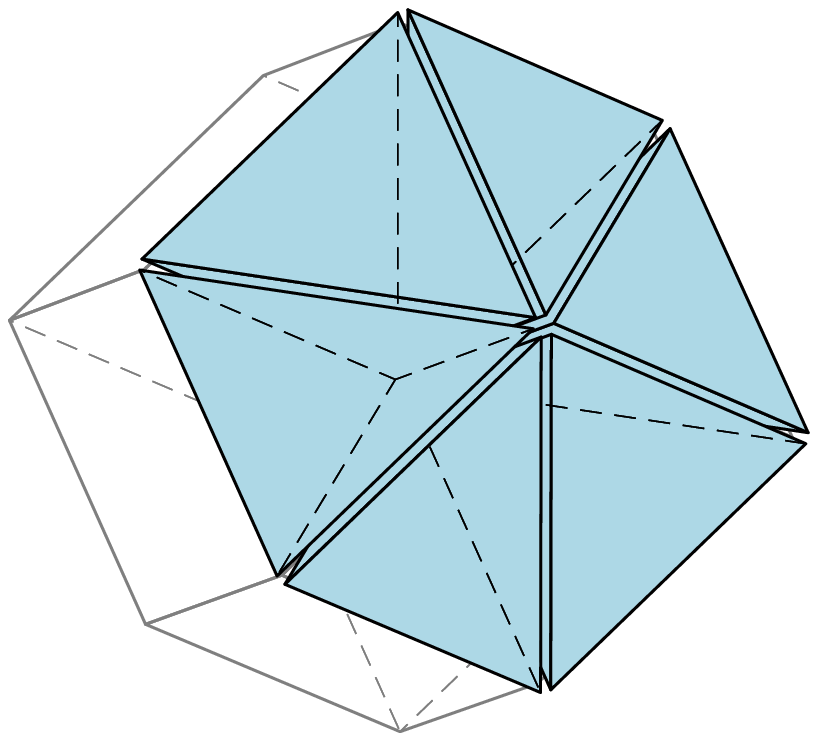} 
}
\subfigure[]{
    \label{fig:A3.V.rhomb.split.2}
\includegraphics[height=.27\textwidth]{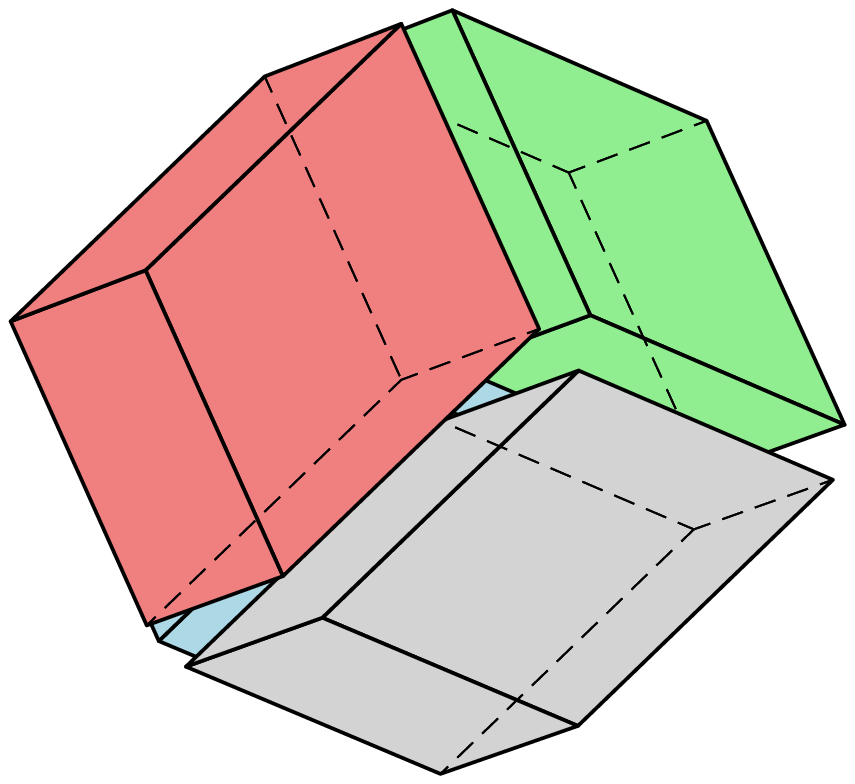} 
}

\end{center}
\caption[]{
        \subref{fig:A3.V.rhomb.split} Decomposition
        of $\vorzero$ of $\ZAn{3}$ into four hyper-rhombi:
            (green) $\rhomb{1}$,
            (red) $\rhomb{2}$,
            (gray) $\rhomb{3}$,
            and
            (blue) $\rhomb{4}$.
        \subref{fig:A3.V.rhomb} 
        $\rhomb{4}$ with its vertices.
        \subref{fig:A3.V.tets}
        Decomposition of $\rhomb{4}$
        into six tetrahedra.
    \subref{fig:A3.V.rhomb.split.2}
    The other hyper-rhombic decomposition.
}
\label{fig:A3.V.decomp}
\end{figure*}

\begin{lemma}
\label{lem:V.decomp.rhombi}
    $\vorzero$
    is the disjoint union of $(n+1)$ 
    congruent
    hyper-rhombi
    and each hyper-rhombus is further composed of $n!$ congruent simplices
    (\reffig{fig:A2.V} and \reffig{fig:A3.V.tets}).
    Specifically,
    the 
    union
    of the $n!$ images of $\simplex_0$
    \eqref{eq:simplex.0}
    under the permutations of the first $n$ components
    form
    the $n$-dimensional hyper-rhombus
    $\rhomb{n+1}$ 
    (the blue rhombus in \reffig{fig:A2.V} and
    the blue rhombohedron in \reffig{fig:A3.V.rhomb}).
    Then, 
    the 
    union of
    the $(n+1)$ images of 
    $\rhomb{n+1}$
    under the circular shifts
    in $\Gshift{n+1}$
    forms 
    $\vorzero$.
    In other words,
    \[
        \vorzero
        =
        \bigsqcup_{\mathclap{\mGamma\in\Gshift{n+1}}}\mGamma\rhomb{n+1}
        =
        \bigsqcup_{j=1}^{n+1}   
        \rhomb{j}
        \text{ where }
        \rhomb{j}=
            \Mshift{n+1}{j\bmod (n+1)}
        \rhomb{n+1}
    \]
    and
    \[
        \rhomb{n+1}
        =\bigsqcup_{\mathclap{\mQ\in \Gi}}\mQ\simplex_0
    \text{ where }
    \Gi:=
    \left\{
        \begin{bmatrix}
            \Mperm  \\
               &   1   \\
        \end{bmatrix}:
            \Mperm\in\Gsym{n}       
    \right\}.
\]

\end{lemma}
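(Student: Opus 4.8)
The plan is to prove the two displayed decompositions in the order in which the simplices aggregate: first assemble $n!$ copies of $\simplex_0$ into the hyper-rhombus $\rhomb{n+1}$, then cyclically shift $\rhomb{n+1}$ to tile $\vorzero$. For the first claim, I would start from the observation that $\setV_0 = \{\vv_j = \mXi\vu_j : 0\le j\le n\}$ is exactly the image under $\mXi$ of the Kuhn simplex vertices $\{\vu_j = (1^j,0^{n-j})\}$ living in $\R^{n+1}$, but with the last coordinate of $\vu_j$ always $0$. Hence $\simplex_0 = \mXi\,\simplexCube_0$ where $\simplexCube_0$ is the standard $n$-dimensional Kuhn simplex inside the coordinate hyperplane $\{t_{n+1}=0\}$ of $\R^{n+1}$. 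Applying a permutation $\mQ = \begin{bmatrix}\Mperm & \\ & 1\end{bmatrix}$ from $\Gi$ commutes appropriately: since $\mXi = \matid{n+1} - \frac{1}{n+1}\matone{n+1}$ commutes with every permutation matrix, $\mQ\simplex_0 = \mXi\,\mQ\simplexCube_0$. Now $\mQ$ restricted to the first $n$ coordinates runs over all of $\Gsym{n}$, and by the Kuhn triangulation (\reflem{lem:kuhn}) applied in dimension $n$, the disjoint union $\bigsqcup_{\Mperm\in\Gsym{n}}\Mperm\simplexCube_0$ fills the $n$-cube $\cube{n+1}{\{n+1\}} = [0,1]^n \times \{0\}$ — i.e. the face of $\cube{n+1}{}$ spanned by $\vunit{1},\dots,\vunit{n}$. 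Therefore $\bigsqcup_{\mQ\in\Gi}\mQ\simplex_0 = \mXi\,[0,1]^n\times\{0\} = \mXi_{n+1}[0,1]^n = \rhomb{n+1}$, which is exactly the definition of $\rhomb{n+1}$; disjointness is preserved because $\mXi$ restricted to $\plnone$ is a bijection (it is the identity there) and the Kuhn pieces are disjoint.

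Next I would handle the circular-shift claim. The key algebraic fact is that the circular shift matrix $\Mshift{n+1}{j}$ permutes the columns of $\mXi$ cyclically, so $\Mshift{n+1}{j}\mXi_{n+1} = \mXi_{\,\sigma_j(n+1)}$ for the appropriate index $\sigma_j(n+1) \in \{1,\dots,n+1\}$, and hence $\Mshift{n+1}{j}\rhomb{n+1} = \rhomb{j'}$ for the corresponding $j'$; running $j$ over $0,\dots,n$ produces exactly $\{\rhomb{1},\dots,\rhomb{n+1}\}$. So $\bigsqcup_{\mGamma\in\Gshift{n+1}}\mGamma\rhomb{n+1} = \bigsqcup_{j=1}^{n+1}\rhomb{j}$ as sets, provided the union is disjoint. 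To identify this union with $\vorzero$, I would invoke \reflem{lem:V.decomp.simplex}: $\vorzero = \bigsqcup_{\Mperm\in\Gsym{n+1}}\Mperm\simplex_0$. The subgroups $\Gi$ and $\Gshift{n+1}$ together generate (at least cover by products) all of $\Gsym{n+1}$ — indeed every permutation of $\{1,\dots,n+1\}$ factors as a cyclic shift followed by a permutation fixing the last element, since given any $\perm\in\Gsym{n+1}$ one first cyclically rotates so that the correct element lands in position $n+1$, then permutes the first $n$ positions. This gives a bijection $\Gsym{n+1} \leftrightarrow \Gshift{n+1}\times\Gi$ (a coset decomposition: $\Gi$ is the stabilizer of the index $n+1$ and $\Gshift{n+1}$ is a transversal), so
\[
\vorzero = \bigsqcup_{\mGamma\in\Gshift{n+1}}\ \bigsqcup_{\mQ\in\Gi}\mGamma\mQ\simplex_0 = \bigsqcup_{\mGamma\in\Gshift{n+1}}\mGamma\left(\bigsqcup_{\mQ\in\Gi}\mQ\simplex_0\right) = \bigsqcup_{\mGamma\in\Gshift{n+1}}\mGamma\rhomb{n+1}.
\]
Congruence of the pieces is immediate since each $\mGamma$ is an orthogonal matrix.

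The main obstacle I anticipate is the bookkeeping of \emph{disjointness} under the two-stage aggregation — one must be careful that the $n!$ Kuhn simplices remain disjoint after applying $\mXi$ (fine, since $\mXi|_{\plnone}$ is injective and $\simplexCube_0 \subset \plnone$... actually $\simplexCube_0$ is \emph{not} in $\plnone$, so one instead uses that $\mXi$ maps $[0,1]^n\times\{0\}$ injectively because the columns $\vxi_1,\dots,\vxi_n$ are linearly independent, as noted after \reflem{lem:rhombi}), and that the $(n+1)$ shifted rhombi overlap only on boundaries, which follows from the known disjointness in \reflem{lem:V.decomp.simplex} pulled back through the coset bijection. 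A secondary point to state carefully is the precise index $j' = j'(j)$ produced by $\Mshift{n+1}{j}$ acting on column $n+1$ of $\mXi$, and verifying it realizes the claimed identity $\rhomb{j} = \Mshift{n+1}{j\bmod(n+1)}\rhomb{n+1}$; this is a direct computation with \eqref{eq:M.shift} and is best dispatched by noting $\Mshift{n+1}{j}$ sends $\vunit{k}\mapsto\vunit{k+j \bmod (n+1)}$ (with representatives in $\{1,\dots,n+1\}$) and hence $\vxi_k \mapsto \vxi_{k+j}$ by the permutation-equivariance of $\mXi$. Everything else is routine once these two disjointness/indexing facts are pinned down.
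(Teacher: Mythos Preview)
Your proposal is correct and follows essentially the same route as the paper: assemble $\rhomb{n+1}$ from the $n!$ Kuhn simplices, then use the coset decomposition $\Gsym{n+1}=\Gshift{n+1}\Gi$ together with \reflem{lem:V.decomp.simplex} to obtain the full Voronoi cell. The only cosmetic difference is that the paper packages your ``$\mXi$ commutes with permutations, hence Kuhn in $\R^n$ projects to $\rhomb{n+1}$'' step into a single appeal to \refcor{cor:kuhn} applied to the matrix $\mXi_{n+1}$ (whose top $n\times n$ block has the form $\alpha\matid{n}+\beta\matone{n}$), whereas you invoke \reflem{lem:kuhn} directly and carry the projection $\mXi$ by hand; the content is identical.
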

\begin{proof}
We already know that $\rhomb{n+1}$ is
a hyper-rhombus due to \reflem{lem:rhombi}.
Since
the vertices of $\setV_0$
are taken from 
\[
    \mXi_{n+1}=
    \begin{bmatrix}
        \vxi_1   &   \cdots  &   \vxi_n
    \end{bmatrix}
    =
    \begin{bmatrix}
    \matid{n} +
    \left(
        -\frac{1}{n+1}
    \right)
    \matone{n}  \\
        \trans{\vone}
    \end{bmatrix}
\]
and
due to \refcor{cor:kuhn},
the union of the images of $\simplex_0$
under $\Gi$ forms
an $n$-dimensional hyper-rhombus embedded
in the hyperplane $\vx\cdot\vunit{n+1}=1$.
Then,
the $(n+1)$ parallelepipeds
$\left\{\rhomb{j}:1\le j\le n+1\right\}$
are closed under the actions
in $\Gshift{n+1}$ since
(with $\mXi_{n+1}$ and $\mXi_j$ viewed as sets)
\[
       \Mshift{n+1}{j\bmod (n+1)}
    \mXi_{n+1}
    =
    \mXi_{j},
        \quad
    1\le j\le n+1.
\]
Finally, the union of them forms 
    $\vorzero$
due to 
\reflem{lem:V.decomp.simplex}
and the decomposition
\[
    \Gsym{n+1}=\Gshift{n+1}\Gi.
\]
\end{proof}

Note that there is another hyper-rhombic decomposition 
\begin{equation}
\label{eq:V.decomp.rhomb.2}
    \vorzero
    =
    \bigsqcup_{j=1}^{n+1}   
    \left(\vxi_j+\rhomb{j}\right)
\end{equation}
which can be obtained by `flipping' all the directions in $\mXi$
    (\reffig{fig:A3.V.rhomb.split.2}).

\begin{corollary}
We can compute the volume of the Voronoi cell as
\[
    \vol\left(\vorzero\right)
    =
    (n+1)\vol\left(\rhomb{n+1}\right)
    =
    (n+1)\sqrt{\trans{\mXi_{n+1}}\mXi_{n+1}}
    =
    \sqrt{n+1}
\]
which was verified by other researchers
\cite{conway82voronoi,koca18explicit}.
\end{corollary}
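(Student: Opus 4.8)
The statement is a chain of three equalities, which I would prove in order.

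\emph{The first equality} follows immediately from \reflem{lem:V.decomp.rhombi}, which gives $\vorzero=\bigsqcup_{j=1}^{n+1}\rhomb{j}$ with $\rhomb{j}=\Mshift{n+1}{j\bmod(n+1)}\rhomb{n+1}$. Each circular-shift matrix is a permutation matrix, hence orthogonal and volume-preserving, so every $\rhomb{j}$ has the same $n$-dimensional volume as $\rhomb{n+1}$; moreover the union is ``disjoint'' in the sense that the pieces overlap only along shared $(n-1)$-dimensional facets, which are $n$-dimensional null sets. Additivity of volume then yields $\vol(\vorzero)=(n+1)\,\vol(\rhomb{n+1})$.

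\emph{For the second equality}, I would note that $\rhomb{n+1}=\mXi_{n+1}[0,1]^n$ is the image of the unit $n$-cube under the linear map $\R^n\to\R^{n+1}$ with matrix $\mXi_{n+1}$, which is injective since any $n$ columns of $\mXi$ are linearly independent (as observed just before \reflem{lem:rhombi}). The $n$-dimensional volume of such a parallelepiped equals the Gram determinant $\sqrt{\det(\trans{\mXi_{n+1}}\mXi_{n+1})}$ --- the standard extension of ``base $\times$ height'' to a flat sitting inside a higher-dimensional space --- and the expression $\sqrt{\trans{\mXi_{n+1}}\mXi_{n+1}}$ in the statement is shorthand for this quantity. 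If one wishes to avoid the non-square formula, one may first apply the orthogonal map $\Mproj{n}$ carrying $\plnone$ onto $\R^n$: it preserves volumes and turns $\mXi_{n+1}$ into a genuine $n\times n$ generator matrix, so the volume becomes the ordinary $|\det|$.

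\emph{The third equality} is the only actual computation. The $(\alpha,\beta)$ entry of $\trans{\mXi_{n+1}}\mXi_{n+1}$ is $\vxi_\alpha\cdot\vxi_\beta$, which by the inner-product evaluation already carried out in the proof of \reflem{lem:rhombi} equals $n/(n+1)$ when $\alpha=\beta$ and $-1/(n+1)$ otherwise; hence \[\trans{\mXi_{n+1}}\mXi_{n+1}=\matid{n}-\frac{1}{n+1}\matone{n}.\] Since $\matone{n}=\vone\trans{\vone}$ has eigenvalues $n$ (once) and $0$ (with multiplicity $n-1$), the matrix $\matid{n}+c\matone{n}$ has determinant $1+cn$; with $c=-\tfrac{1}{n+1}$ this is $1-\tfrac{n}{n+1}=\tfrac{1}{n+1}$. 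Therefore $\vol(\rhomb{n+1})=1/\sqrt{n+1}$ and $\vol(\vorzero)=(n+1)/\sqrt{n+1}=\sqrt{n+1}$. None of this is deep; the only mildly delicate point is the volume formula for $\rhomb{n+1}$, which lives in $\R^{n+1}$ rather than $\R^n$, and even that is disposed of by the orthogonal reduction above --- the remainder is just eigenvalue bookkeeping for $\matid{n}-\tfrac{1}{n+1}\matone{n}$.
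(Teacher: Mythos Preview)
Your argument is correct and complete. The paper itself offers no proof for this corollary---it is stated without a proof environment, immediately after \reflem{lem:V.decomp.rhombi}, with the final value attributed to earlier authors---so you have simply made explicit what the paper leaves implicit: the first equality from the congruent decomposition of \reflem{lem:V.decomp.rhombi}, the second from the Gram-determinant formula for an $n$-parallelepiped sitting in $\R^{n+1}$, and the third from the standard identity $\det(\matid{n}+c\matone{n})=1+cn$ applied with $c=-1/(n+1)$.
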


\begin{lemma}
All the $k$-dimensional faces of $\vorzero$, $2\le k\le n-1$,
are hyper-rhombi.
See \reffig{fig:A4.V.face} for $n=4$.
\end{lemma}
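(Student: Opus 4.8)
The plan is to recognize $\vorzero$ as the zonotope generated by the columns $\vxi_1,\dots,\vxi_{n+1}$ of $\mXi$ and then read its faces off from the standard description of the faces of a zonotope: every proper face will be a translate of some $\rhomb{\setJ}$, and \refcor{cor:Xi.rhomb} identifies those of dimension between $2$ and $n-1$ as hyper-rhombi.

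First I would verify that $\vorzero=\mXi[0,1]^{n+1}=\bigl\{\sum_{i=1}^{n+1}t_i\vxi_i:0\le t_i\le 1\bigr\}$. The inclusion ``$\subseteq$'' is immediate from \reflem{lem:V.decomp.rhombi}, since each $\rhomb{j}$ lies in $\mXi[0,1]^{n+1}$. For ``$\supseteq$'', take a point $\sum_{i=1}^{n+1}t_i\vxi_i$ with all $t_i\in[0,1]$ and choose $j$ minimizing $t_i$; since $\sum_{i=1}^{n+1}\vxi_i=\vzero$, the point equals $\sum_{i\ne j}(t_i-t_j)\vxi_i$, and $0\le t_i-t_j\le 1$, so it lies in $\rhomb{j}\subseteq\vorzero$. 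Thus $\vorzero$ is a zonotope on the generators $\vxi_1,\dots,\vxi_{n+1}$. (If the vertex-first cube-projection statement is already available at this point, this step is just a citation.)

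Next I would apply the face structure of a zonotope. For a point $\vx=\sum_i t_i\vxi_i$ and a linear functional $c$, the quantity $c\cdot\vx=\sum_i t_i(c\cdot\vxi_i)$ is maximized over $\mXi[0,1]^{n+1}$ by setting $t_i=1$ when $c\cdot\vxi_i>0$, $t_i=0$ when $c\cdot\vxi_i<0$, and leaving $t_i$ free in $[0,1]$ when $c\cdot\vxi_i=0$. Hence the exposed face of $\vorzero$ in the direction $c$ equals $\vp+\rhomb{\setJ(c)}$, where $\setJ(c):=\{i:c\cdot\vxi_i\ne 0\}$ and $\vp:=\sum_{c\cdot\vxi_i>0}\vxi_i$. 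Since $\vorzero$ is a polytope, every proper face is exposed, hence is a translate of $\rhomb{\setJ}$ for some nonempty $\setJ\subsetneq\{1,\dots,n+1\}$.

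Finally I would do the dimension count and conclude. Because any $n$ of the $\vxi_i$ are linearly independent, $\dim\rhomb{\setJ}=n+1-\#\setJ$, so a $k$-dimensional face corresponds to $\#\setJ=n+1-k$; for $2\le k\le n-1$ this means $2\le\#\setJ\le n-1$. By \refcor{cor:Xi.rhomb} (see also \reflem{lem:rhombi}), $\rhomb{\setJ}$ is then a hyper-rhombus, and a translate of a hyper-rhombus is again a hyper-rhombus, so every $k$-dimensional face with $2\le k\le n-1$ is a hyper-rhombus. The only step carrying any content is recognizing $\vorzero$ as a zonotope, which follows in two lines from \reflem{lem:V.decomp.rhombi} via the index-minimization trick; the rest is the routine combinatorics of zonotope faces, so I anticipate no real obstacle beyond being careful that proper faces of a polytope are exposed and that the dimension formula $\dim\rhomb{\setJ}=n+1-\#\setJ$ is justified by that linear-independence property.
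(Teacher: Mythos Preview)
Your proof is correct. The paper's own proof is a one-line citation of \refcor{cor:Xi.rhomb} and \reflem{lem:V.decomp.rhombi}, leaving implicit the step you make explicit, namely that every proper face of $\vorzero$ is a translate of some $\rhomb{\setJ}$.

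The difference is thus one of rigor and route rather than of idea. The paper relies on the hyper-rhombic decomposition $\vorzero=\bigsqcup_j\rhomb{j}$ and tacitly assumes the reader sees that faces of $\vorzero$ must then be faces of the pieces, hence translates of some $\rhomb{\setJ}$; that inference is plausible but not immediate from the decomposition alone (a priori a face of the union could be a nontrivial union of faces of several $\rhomb{j}$). You close this gap by first identifying $\vorzero$ with the zonotope $\mXi[0,1]^{n+1}$ via the index-minimization trick---a fact the paper only records in the later section on the vertex-first projection of the cube---and then invoking the standard description of zonotope faces as translates of sub-zonotopes. What you gain is a self-contained justification that each $k$-face is exactly one translate of $\rhomb{\setJ}$ with $\#\setJ=n+1-k$, after which \refcor{cor:Xi.rhomb} finishes the argument just as in the paper. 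The cost is importing (or re-deriving) the zonotope identification slightly ahead of where the paper places it.
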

\begin{proof}
    The claim holds due to 
\refcor{cor:Xi.rhomb}
and
\reflem{lem:V.decomp.rhombi}.
\end{proof}
Note that \citet{koca18explicit}
proved the case of $k=n-1$.

\begin{figure*}[ht]
\begin{center}
\subfigure[]{
    \label{fig:A3.V.roof}
        \includegraphics[height=.26\textwidth]{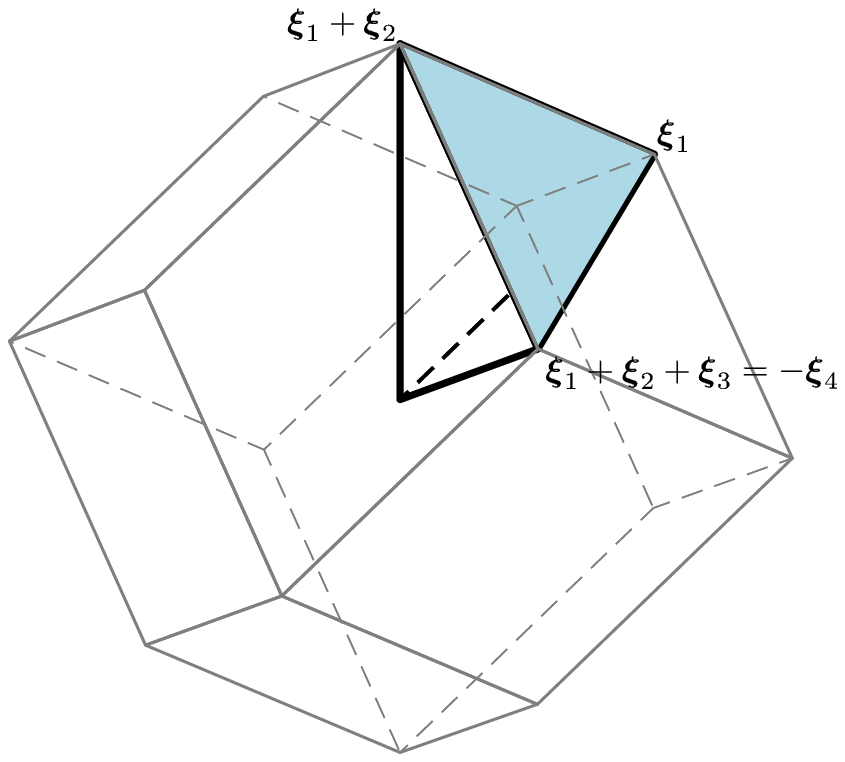}
}
\subfigure[]{
    \label{fig:A3.V.face}
        \includegraphics[height=.26\textwidth]{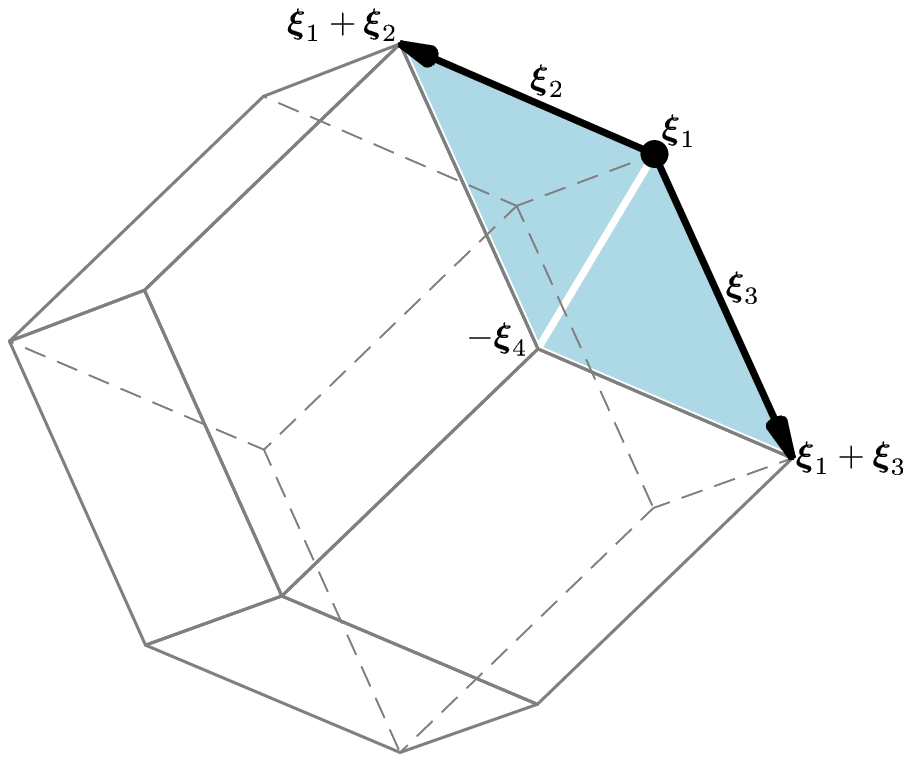}
}
\subfigure[]{
    \label{fig:A3.V.faces}
        \includegraphics[height=.26\textwidth]{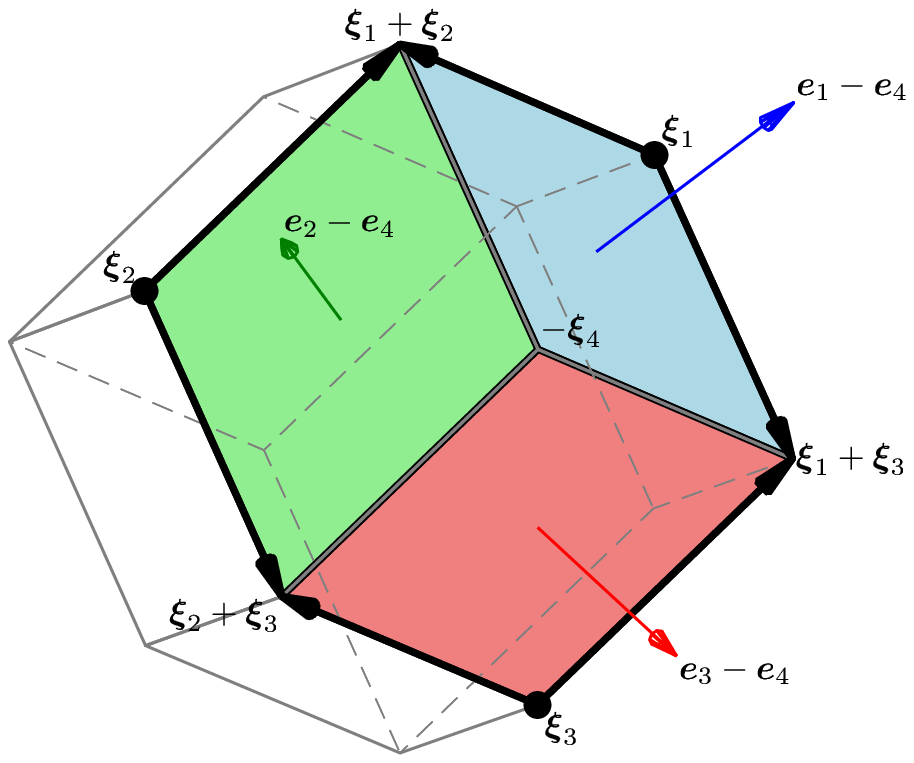}
}
\end{center}
\caption[]{
    \subref{fig:A3.V.roof} (Blue triangle) the roof of the fundamental simplex.
    \subref{fig:A3.V.face} (Blue rhombus) the hyper-rhombic face $\vxi_1+\rhomb{14}$.
    The triangle $\CH{\vxi_1,\vxi_1+\vxi_3,-\vxi_4}$
    is obtained by applying
    $
        \begin{bsmallmatrix}
            1   \\
             &  &   1   \\
             &  1   \\
             &  &   &   1
        \end{bsmallmatrix}
    $ to \subref{fig:A3.V.roof}.
    \subref{fig:A3.V.faces} Three hyper-rhombic faces
    sharing $-\vxi_4$:
    (blue) 
    $\vxi_1+\rhomb{14}$,
    (green) $\vxi_2+\rhomb{24}$,
    and
    (red) $\vxi_3+\rhomb{34}$.
    The red and green hyper-rhombic faces
    are
    obtained by applying 
    $
        \begin{bsmallmatrix}
            &   1   \\
            &   &   1   \\
            1   \\
            &   &   &   1   
        \end{bsmallmatrix}
    $
    and
    $
        \begin{bsmallmatrix}
            &   &   1   \\
            1   \\
            &   1   \\
            &   &   &   1
        \end{bsmallmatrix}
    $
    to the blue face, respectively.
    The colored arrows denote the nearest lattice points
    of $\vorzero$ associated with each face.
    The whole $12$ faces are obtained by applying
    the transformations in $\Gshift{4}$ to \subref{fig:A3.V.faces}.
}
\label{fig:V3.V.boundary}
\end{figure*}

\begin{figure*}[ht]
\begin{center}
    \includegraphics[width=.4\textwidth]{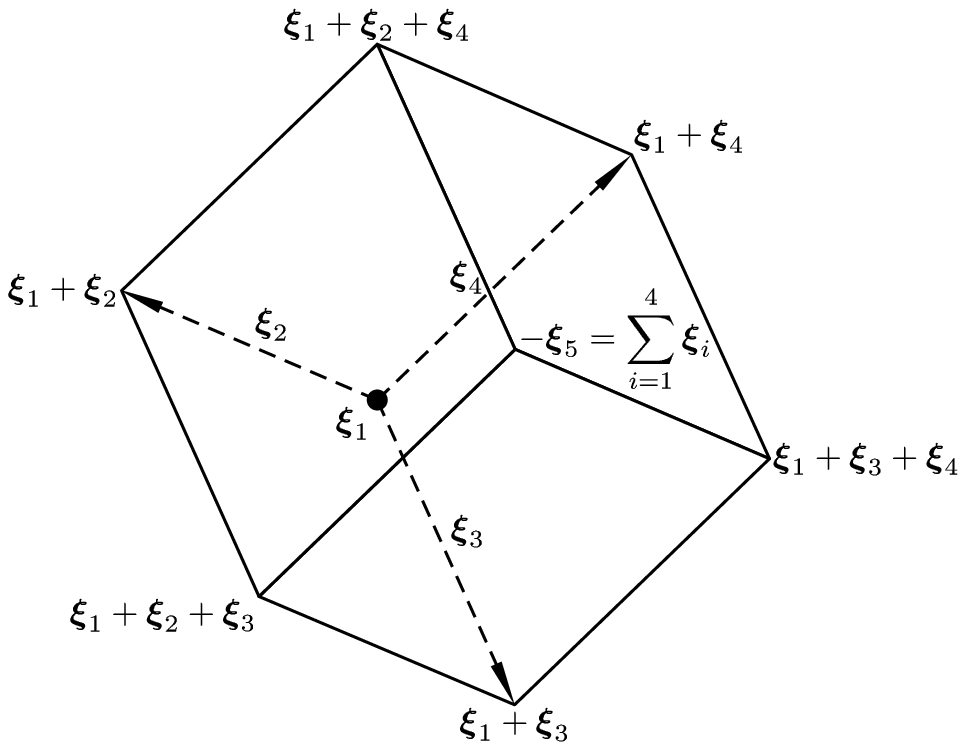}
\end{center}
\caption{
    The hyper-rhombic face $\vxi_1+\rhomb{15}$
    of 
    $\vorzero$ of $\ZAn{4}$.
The tetrahedron 
$\CH{\left\{\sum_{i=1}^j\vxi_i:1\le j\le 4\right\}}$
is the roof of $\simplex_0$.
}
\label{fig:A4.V.face}
\end{figure*}

Next, we analyze the structure of
the \emph{cells}, $(n-1)$-dimensional faces, of $\vorzero$.
We first determine the explicit structure
of the hyper-rhombic face
that contains the roof of the fundamental simplex.
\begin{lemma}
\label{lem:rhomb.face}
    The hyper-rhombic cell 
    of $\vorzero$ that contains the roof of $\simplex_0$
    is 
    \[
        \vxi_1+\rhomb{1,n+1}
    \]
    which is composed of the $(n-1)!$ images 
    of the roof 
    of $\simplex_0$
    under the group
    \begin{equation}
    \label{eq:G2}
        \Gii:=
        \left\{
        \begin{bsmallmatrix}
            1   \\
            &   \Mperm \\
            &   &   1
        \end{bsmallmatrix}:
                \Mperm\in\Gsym{n-1}
        \right\}
    \end{equation}
    and they
    all share the edge connecting $\vxi_1$ and $-\vxi_{n+1}$.
    Refer to \reffig{fig:A3.V.face} and \reffig{fig:A4.V.face}
    for $n=3$ and $n=4$, respectively.
\end{lemma}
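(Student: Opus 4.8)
The plan is to prove this in two stages: (i) the decomposition of $\vxi_1+\rhomb{1,n+1}$ into $(n-1)!$ copies of the roof of $\simplex_0$, and (ii) the fact that $\vxi_1+\rhomb{1,n+1}$ is a cell of $\vorzero$, necessarily the unique one containing the roof. Stage (i) repeats the mechanism of \reflem{lem:V.decomp.rhombi} one dimension lower. Since $\mXi$ commutes with permutation matrices, each $\Mperm\in\Gii$ acts on $\{\vxi_1,\dots,\vxi_{n+1}\}$ by the underlying permutation, which fixes $1$ and $n+1$; so $\Gii$ fixes $\vxi_1$ and permutes $\{\vxi_2,\dots,\vxi_n\}$ as the full symmetric group $\Gsym{n-1}$, and these columns of $\mXi_{1,n+1}$ have all equal pairwise inner products (see the proof of \reflem{lem:rhombi}). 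Hence \refcor{cor:kuhn}, applied exactly as it was to $\mXi_{n+1}$ in \reflem{lem:V.decomp.rhombi}, shows that the $(n-1)!$ images under $\Gii$ of $\CH{\{\sum_{i=2}^{j+1}\vxi_i:0\le j\le n-1\}}$ tile $\rhomb{1,n+1}$ and share the edge from $\vzero$ to $\sum_{i=2}^{n}\vxi_i$. Translating by $\vxi_1$, and using that each $\Mperm\in\Gii$ fixes $\vxi_1$, turns these pieces into the $\Gii$-images of $\CH{\{\vxi_1+\sum_{i=2}^{j+1}\vxi_i:0\le j\le n-1\}}=\CH{\{\sum_{i=1}^{k}\vxi_i:1\le k\le n\}}=\CH{\{\vv_1,\dots,\vv_n\}}$, the roof of $\simplex_0$, and turns the shared edge into the segment from $\vxi_1$ to $\vxi_1+\sum_{i=2}^{n}\vxi_i=-\vxi_{n+1}$ (the last equality because $\sum_{i=1}^{n+1}\vxi_i=\vzero$). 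That $\vxi_1+\rhomb{1,n+1}$ is a hyper-rhombus is \refcor{cor:Xi.rhomb}.

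For stage (ii) I would first pin down the supporting hyperplane. Since $\vunit{1}-\vunit{n+1}$ is a root, hence a nonzero point of $\ZAn{n}$ of squared length $2$, the Voronoi inequality against that lattice point gives $\vorzero\subseteq\{\vx:\vx\cdot(\vunit{1}-\vunit{n+1})\le 1\}$, so $H:=\{\vx:\vx\cdot(\vunit{1}-\vunit{n+1})=1\}$ is a supporting hyperplane and $F:=\vorzero\cap H$ is a face of $\vorzero$. From $\vxi_1\cdot(\vunit{1}-\vunit{n+1})=1$ and $\vxi_i\cdot(\vunit{1}-\vunit{n+1})=0$ for $2\le i\le n$ one gets $\vxi_1+\rhomb{1,n+1}\subseteq H$; and $\vxi_1+\rhomb{1,n+1}\subseteq\vorzero$, since by stage (i) it is a union of $\Gii$-images (hence $\Gsym{n+1}$-images) of a face of $\simplex_0$, each contained in $\vorzero$ by \reflem{lem:V.decomp.simplex}. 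Therefore $\vxi_1+\rhomb{1,n+1}\subseteq F$, so $\dim F=n-1$ and $F$ is a cell. To promote this containment to equality, evaluate $\vx\cdot(\vunit{1}-\vunit{n+1})$ on the pieces of the decomposition $\vorzero=\bigsqcup_{j=1}^{n+1}\rhomb{j}$ of \reflem{lem:V.decomp.rhombi}: on $\rhomb{n+1}=\{\sum_{i=1}^{n}t_i\vxi_i\}$ it equals $t_1$, so $\rhomb{n+1}\cap H=\vxi_1+\rhomb{1,n+1}$; on $\rhomb{1}$ it equals $-t_{n+1}\le 0$, so $\rhomb{1}\cap H=\emptyset$; and on $\rhomb{j}$ with $2\le j\le n$ it equals $t_1-t_{n+1}$, so $\rhomb{j}\cap H$ is the slice $\{t_1=1,\,t_{n+1}=0\}$ of $\rhomb{j}$, of dimension $n-2$. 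Hence $F$ is the union of $\vxi_1+\rhomb{1,n+1}$ with sets of dimension at most $n-2$; since a facet of a polytope is the closure of its relative interior, this forces $F\subseteq\vxi_1+\rhomb{1,n+1}$, so $F=\vxi_1+\rhomb{1,n+1}$. Uniqueness then follows at once: the roof is an $(n-1)$-dimensional subset of the cell $F$, and two distinct cells of a polytope meet in dimension at most $n-2$.

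The step I expect to be the real obstacle is this last upgrade $F=\vxi_1+\rhomb{1,n+1}$, as opposed to $F$ being strictly larger: lying on a supporting hyperplane controls $F$ only from outside, so one genuinely needs interior information, here supplied by the explicit hyper-rhombic tiling of \reflem{lem:V.decomp.rhombi} together with the dimension count on the slices $\rhomb{j}\cap H$. Everything else is routine bookkeeping with the two identities $\vxi_\alpha\cdot\vxi_\beta\in\{\tfrac{n}{n+1},-\tfrac{1}{n+1}\}$ and $\sum_{i=1}^{n+1}\vxi_i=\vzero$ and the already-established tilings.
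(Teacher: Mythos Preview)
Your stage~(i) is exactly the paper's argument: write the roof vertices as $\vxi_1$ plus partial sums drawn from $\mXi_{1,n+1}$, observe that the relevant $(n-1)\times(n-1)$ block has the form $\alpha\matid{}+\beta\matone{}$, apply \refcor{cor:kuhn} to get the Kuhn-style tiling of $\rhomb{1,n+1}$ by the $(n-1)!$ simplices under $\Gii$, and translate by the $\Gii$-fixed vector $\vxi_1$ to carry the shared edge to $[\vxi_1,-\vxi_{n+1}]$.

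Your stage~(ii), however, goes beyond what the paper does here. The paper's proof of this lemma stops after the decomposition and does \emph{not} separately verify that $\vxi_1+\rhomb{1,n+1}$ is a facet of $\vorzero$; that is established only later, globally, via the factorization $\Gsym{n+1}=\Gshift{n+1}\Giii\Gii$, which accounts for all of $\partial\vorzero$ as the $\Gshift{n+1}\Giii$-orbit of $\vxi_1+\rhomb{1,n+1}$. Your route---support by the bisector of the root $\vunit{1}-\vunit{n+1}$, containment via \reflem{lem:V.decomp.simplex}, and the upgrade to equality by slicing each $\rhomb{j}$ from \reflem{lem:V.decomp.rhombi} with $H$---is a correct, self-contained local alternative that makes the facet claim hold already at this lemma. (In fact your slices $\rhomb{j}\cap H=\vxi_1+\rhomb{1,j,n+1}$ for $2\le j\le n$ are faces of $\vxi_1+\rhomb{1,n+1}$, so the closure argument is not even needed.) The paper's approach is shorter at this point but defers the payoff; yours front-loads the geometry and lets the lemma stand alone.
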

\begin{proof}
Due to \reflem{lem:V.decomp.simplex},
    the roof of $\simplex_0$ is
    \[
        \CH{
        \left\{
            \vv_j
            =\vxi_1+
            \sum_{i=2}^j
            :1\le j\le n
        \right\}
        }
    \]
    where the vertices are taken from
    the $(n+1)\times (n-1)$ matrix
    \[
        \mXi_{1,n+1}=
        \begin{bmatrix}
            \\
            \vxi_2   &   \cdots  &   \vxi_n   \\
            \\
        \end{bmatrix}
        =
        \begin{bmatrix}
            \trans{\vone}   \\
            \matid{n-1}
            +\left(-\frac{1}{n+1}\right)\matone{n-1} \\
            \trans{\vone}   \\
        \end{bmatrix}.
    \]
     Due to \refcor{cor:kuhn},
    the union of the images of $\CH{\left\{\vv_j:2\le j\le n\right\}}$
    under $\Gii$ forms the $(n-1)$-dimensional hyper-rhombus
    $\rhomb{1,n+1}$ embedded in the $(n-1)$-dimensional hyperplane
    \[
        \left\{
            \vx\cdot\vunit{1}=1:\vx\in\R^{n+1}
        \right\}
        \cap
        \left\{
            \vx\cdot\vunit{n+1}=1:\vx\in\R^{n+1}
        \right\}
    \]
    and they all share the edge connecting  $\vzero$ and $\sum_{i=2}^n\vxi_i$.
    Moreover, 
    since $\vxi_1$ is invariant under $\Gii$
    the `shifted' simplices all share the edge connecting
    $\vxi_1$ and $\vxi_1+\sum_{i=2}^n\vxi_i=-\vxi_{n+1}$.
\end{proof}

Next we show that the vertex $-\vxi_{n+1}$ 
of $\vorzero$
is surrounded by $n$ non-overlapping congruent hyper-rhombic cells.
\begin{lemma}
\label{lem:rhomb.vert}
On the boundary of $\vorzero$,
    there are $n$ congruent hyper-rhombic cells sharing the vertex $-\vxi_{n+1}$.
    These cells are 
    \begin{enumerate}[(i)]
       \item     the images of
    $\vxi_1+\rhomb{1,n+1}$ under the  
    group of the circular shifts of the first $n$ components and 
    \item   the $(n-1)$-dimensional
    hyper-rhombic cells of $\rhomb{n+1}$ 
    adjacent to $-\vxi_{n+1}$.
    \end{enumerate}
    Refer to \reffig{fig:A2.V} and
    \reffig{fig:A3.V.faces}.

\end{lemma}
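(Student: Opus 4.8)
The plan is to bootstrap from \reflem{lem:rhomb.face}, which already exhibits one hyper-rhombic cell of $\vorzero$ having $-\vxi_{n+1}$ as a vertex, and then to sweep it around $-\vxi_{n+1}$ by the circular shift of the first $n$ coordinates, checking that the orbit so produced coincides with the family of facets of the parallelepiped $\rhomb{n+1}$ incident to that vertex. Congruence and non-overlapping then come essentially for free.

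First I would set $C_1:=\vxi_1+\rhomb{1,n+1}$, which by \reflem{lem:rhomb.face} is a hyper-rhombic cell of $\vorzero$; since its constituent simplices all share the edge from $\vxi_1$ to $-\vxi_{n+1}$, the point $-\vxi_{n+1}$ — which is the corner $\vxi_1+\sum_{i=2}^{n}\vxi_i$ of the parallelepiped $C_1$, with all free parameters equal to $1$ — is a vertex of $C_1$. Next let $\gamma_i:=\begin{bsmallmatrix}\Mshift{n}{i}&\\&1\end{bsmallmatrix}$ for $0\le i\le n-1$ denote the circular shifts of the first $n$ components. Because $\ZAn{n}=\Z^{n+1}\cap\plnone$ is invariant under every coordinate permutation, so is $\vorzero$, hence each $\gamma_i C_1$ is again a hyper-rhombic cell of $\vorzero$, congruent to $C_1$ since $\gamma_i$ is an isometry; and since all of the first $n$ entries of $\vxi_{n+1}$ equal $-1/(n+1)$, each $\gamma_i$ fixes $\vxi_{n+1}$ and hence fixes $-\vxi_{n+1}$, so all $n$ cells $\gamma_i C_1$ share the vertex $-\vxi_{n+1}$.

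Then I would make the orbit explicit. Exactly as in the proof of \reflem{lem:V.decomp.rhombi}, $\gamma_i$ permutes the columns $\vxi_1,\dots,\vxi_n$ of $\mXi$ cyclically among themselves while fixing $\vxi_{n+1}$; writing $\gamma_i\vxi_k=\vxi_{\sigma_i(k)}$ for the induced cyclic permutation $\sigma_i$ of $\{1,\dots,n\}$, one gets
\[
    \gamma_i C_1 = \gamma_i \vxi_1 + \gamma_i \rhomb{1,n+1} = \vxi_{\sigma_i(1)} + \rhomb{\sigma_i(1),\,n+1},
\]
because $\gamma_i$ carries the spanning multiset $\{\vxi_2,\dots,\vxi_n\}$ of $\rhomb{1,n+1}$ onto $\{\vxi_j:j\le n,\ j\ne\sigma_i(1)\}$, the spanning multiset of $\rhomb{\sigma_i(1),n+1}$. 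As $i$ runs over $0,\dots,n-1$ the value $\sigma_i(1)$ runs over all of $\{1,\dots,n\}$, so the orbit is $\{\vxi_k+\rhomb{k,n+1}:1\le k\le n\}$; this is description (i), and these are $n$ pairwise distinct $(n-1)$-faces of the convex polytope $\vorzero$ (distinct affine hulls), so their relative interiors are disjoint, i.e.\ they are non-overlapping.

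Finally I would match this family with description (ii). Writing $\rhomb{n+1}=\{\sum_{k=1}^{n}t_k\vxi_k:0\le t_k\le1\}$, the vertex $-\vxi_{n+1}=\sum_{k=1}^{n}\vxi_k$ is the corner with $t_1=\dots=t_n=1$, and freezing the $k$-th parameter at $1$ yields the adjacent facet $\vxi_k+\{\sum_{j\le n,\,j\ne k}t_j\vxi_j:0\le t_j\le1\}=\vxi_k+\rhomb{k,n+1}$, which is an $(n-1)$-dimensional hyper-rhombus by \refcor{cor:Xi.rhomb}; this is exactly the family of (i). The step I expect to need the most care is the bookkeeping in the third paragraph: verifying that $\gamma_i$ fixes both $\vxi_{n+1}$ and $-\vxi_{n+1}$ and permutes $\vxi_1,\dots,\vxi_n$ cyclically without repetition, so that the orbit genuinely has $n$ distinct members. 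The only other point worth a sentence is that each $\vxi_k+\rhomb{k,n+1}$ lies on $\partial\vorzero$ rather than in the interior — immediate, since it is the image of the boundary cell $C_1$ of \reflem{lem:rhomb.face} under a symmetry of $\vorzero$.
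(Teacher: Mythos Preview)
Your proof is correct and follows essentially the same strategy as the paper: transport the known cell $\vxi_1+\rhomb{1,n+1}$ by the circular shifts $\gamma_i=\begin{bsmallmatrix}\Mshift{n}{i}&\\&1\end{bsmallmatrix}$, compute the orbit as $\{\vxi_k+\rhomb{k,n+1}:1\le k\le n\}$, and observe that each member contains $-\vxi_{n+1}$. The only noteworthy differences are cosmetic: the paper deduces non-overlap from the group factorization $\Gii\cap\Giii=\{\matid{n+1}\}$ at the level of roof simplices, whereas you appeal directly to the fact that distinct facets of a convex polytope have disjoint relative interiors; and you spell out the identification with description~(ii) explicitly (freezing $t_k=1$ in $\rhomb{n+1}$), which the paper leaves implicit.
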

\begin{proof}
    Let
    \begin{equation}
    \label{eq:G3}
        \Giii:=
        \left\{
                \begin{bmatrix}
                    \Mshift{n}{i} \\
                    &   1
                \end{bmatrix}:
                \Mshift{n}{i}\in\Gshift{n},
                    0\le i\le n-1
        \right\}.
    \end{equation}
    Since 
        $\Gii,\Giii\le \Gsym{n+1}$,
        $\Gii\cap \Giii=\{\matid{n+1}\}$,
        and all the cells of $\vorzero$ are composed of the images
        of the roof of $\simplex_0$ under $\Gsym{n+1}$,
        the $n$ images of $\vxi_1+\rhomb{1,n+1}$ under $\Giii$
        do not overlap.

Moreover, since ($0\le i\le n$)
    \[
        \begin{bmatrix}
            \Mshift{n}{i}   \\
            &   1
        \end{bmatrix}
    (\vxi_1+\rhomb{1,n+1})=\vxi_{i+1}+\rhomb{i+1,n+1}
    \text{ and }
    \vxi_{i+1}+\sum_{
            \mathclap{
            \substack{k=1\\k\ne i+1,n+1}}}^{n+1}\vxi_k
    =\sum_{k=1}^n\vxi_k
    =-\vxi_{n+1},
    \]
    the $n$ images of $\vxi_1+\rhomb{1,n+1}$
    under $\Giii$ share the vertex $-\vxi_{n+1}$.
\end{proof}

Finally, the next lemma shows that the boundary of $\vorzero$ is composed
of  $n(n+1)$ congruent hyper-rhombic cells
associated with the nearest neighbor lattice points of $\An{n}$
and determines their explicit structures.

\begin{lemma}
    The $(n+1)$ images  of $n$ hyper-rhombic cells sharing $-\vxi_{n+1}$
(\reflem{lem:rhomb.vert})
    under 
    $\Gshift{n+1}$
    form the boundary of $\vorzero$.
    Overall, the boundary of $\vorzero$ is composed of $n(n+1)$ congruent hyper-rhombic faces
    \[
        \{
            \vxi_i+\rhomb{ij}:
            1\le i\ne j\le n+1
        \}.
    \]
    Below are the properties of the face $\vxi_i+\rhomb{ij}$.
    (In the following, we denote $\Mshift{n}{i\bmod n}=\Mshift{n}{i}$ for simplicity.)
    \begin{enumerate}[(1)]
    \item   
    It is obtained 
    by transforming  $\vxi_1+\rhomb{1,n+1}$
    with
        \[
            \begin{bmatrix}
            \Mshift{n+1}{j}
            \end{bmatrix}
            \begin{bmatrix}
                \Mshift{n}{i-j-1}  \\
                    &   1
            \end{bmatrix}.
        \]
        \item
        It is embedded in the bisecting hyperplane of the nearest neighbor point
            $\vunit{i}-\vunit{j}\in\ZAn{n}$.
            \item
            It is composed of the images of the roof of $\simplex_0$
            that share the edge connecting $\vxi_i$ and $-\vxi_j$.
            \end{enumerate}

\end{lemma}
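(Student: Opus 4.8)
The plan is to read the boundary off the hyper-rhombic subdivision $\vorzero=\bigsqcup_{j=1}^{n+1}\rhomb{j}$ of \reflem{lem:V.decomp.rhombi}. Each $\rhomb{j}$ is an $n$-dimensional parallelepiped with $\vzero$ as a vertex, so its $2n$ facets split into the $n$ \emph{inner} facets $\{\rhomb{j,m}:m\ne j\}$ that meet $\vzero$ and the $n$ \emph{outer} facets $\{\vxi_m+\rhomb{j,m}:m\ne j\}$ that meet the opposite vertex $\sum_{k\ne j}\vxi_k=-\vxi_j$. First I would prove $\rhomb{j}\cap\rhomb{j'}=\rhomb{j,j'}$ for $j\ne j'$: expanding a common point of $\rhomb{j}$ and $\rhomb{j'}$ in the basis $\{\vxi_k:k\ne j\}$ of $\plnone$ and substituting $\vxi_j=-\sum_{k\ne j}\vxi_k$ forces its $\vxi_{j'}$-coordinate to be simultaneously $\ge 0$ (from $\rhomb{j}$) and $\le 0$ (from $\rhomb{j'}$), hence $0$; this pins the intersection down to $\rhomb{j,j'}$ and at the same time shows $\rhomb{j}$ and $\rhomb{j'}$ lie on opposite sides of $\mathrm{aff}\,\rhomb{j,j'}$. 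Hence the $\rhomb{j}$ and their faces form a polytopal complex covering $\vorzero$ in which each inner facet $\rhomb{j,m}$ is shared by exactly the two maximal cells $\rhomb{j},\rhomb{m}$, while each outer facet, being distinct from every $\rhomb{j}\cap\rhomb{j'}$, lies in a single maximal cell. By the standard description of the boundary of such a complex, $\partial\vorzero$ is the union of the outer facets, i.e. $\partial\vorzero=\bigcup_{j=1}^{n+1}\{\vxi_i+\rhomb{ij}:i\ne j\}=\{\vxi_i+\rhomb{ij}:1\le i\ne j\le n+1\}$, a set of $n(n+1)$ cells.

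Next I would line this up with \reflem{lem:rhomb.vert}: the $n$ cells there sharing $-\vxi_{n+1}$ are precisely the outer facets of $\rhomb{n+1}$, and since $\Mshift{n+1}{j}\vxi_k=\vxi_{k+j}$ (indices mod $n+1$), the shifts in $\Gshift{n+1}$ carry the outer facets of $\rhomb{n+1}$ onto the outer facets of $\rhomb{1},\dots,\rhomb{n+1}$ in turn, and therefore onto all of $\partial\vorzero$. Each of the $n(n+1)$ faces is the image of $\vxi_1+\rhomb{1,n+1}$ under a permutation matrix, so it is congruent to $\vxi_1+\rhomb{1,n+1}$, which is a translate of the hyper-rhombus $\rhomb{1,n+1}$ (\refcor{cor:Xi.rhomb}); and they are pairwise distinct, because by~(2) below $\vxi_i+\rhomb{ij}$ lies in the hyperplane $H_{ij}:=\{\vx:\vx\cdot(\vunit{i}-\vunit{j})=1\}$ and these hyperplanes differ for distinct ordered pairs $(i,j)$.

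It remains to establish the three listed properties. For~(1), one applies $\Mshift{n+1}{j}$ after the block circular shift $\mathrm{diag}(\Mshift{n}{i-j-1},1)$ to $\vxi_1$ and to the spanning multiset $\mXi_{1,n+1}=\{\vxi_2,\dots,\vxi_n\}$ of $\rhomb{1,n+1}$: the block shift cyclically shifts the indices of $\vxi_1,\dots,\vxi_n$ by $i-j-1$ while fixing $\vxi_{n+1}$, and $\Mshift{n+1}{j}$ then shifts all $n+1$ indices by $j$, so the composite sends $\vxi_1\mapsto\vxi_i$ and the spanning set of $\rhomb{1,n+1}$ onto that of $\rhomb{ij}$, whence $\vxi_1+\rhomb{1,n+1}\mapsto\vxi_i+\rhomb{ij}$; this is routine once the cyclic wrap-around (mod $n$ versus mod $n+1$) is tracked carefully. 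For~(2), from $\vxi_k=\vunit{k}-\tfrac1{n+1}\vone$ one gets $\vxi_k\cdot(\vunit{i}-\vunit{j})=\delta_{ki}-\delta_{kj}$, which equals $1$ for $k=i$, $-1$ for $k=j$, and $0$ otherwise; hence every point $\vxi_i+\sum_{k\ne i,j}t_k\vxi_k$ of $\vxi_i+\rhomb{ij}$ satisfies $\vx\cdot(\vunit{i}-\vunit{j})=1$, so $\vxi_i+\rhomb{ij}\subset H_{ij}$, and as $\vunit{i}-\vunit{j}\in\ZAn{n}$ with $\|\vunit{i}-\vunit{j}\|^2=2$ forces $\vorzero\subset\{\vx\cdot(\vunit{i}-\vunit{j})\le1\}$, the hyperplane $H_{ij}$ is exactly the bisector of $\vzero$ and the nearest-neighbour point $\vunit{i}-\vunit{j}$ and supports $\vorzero$ along this face. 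For~(3), \reflem{lem:rhomb.face} presents $\vxi_1+\rhomb{1,n+1}$ as the union of the $(n-1)!$ images of the roof of $\simplex_0$ under $\Gii$, all sharing the edge from $\vxi_1$ to $-\vxi_{n+1}$; applying the permutation matrix of~(1) carries images of the roof to images of the roof (that collection being $\Gsym{n+1}$-invariant) and carries the shared edge to the edge from $\vxi_i$ to $-\vxi_j$.

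The main obstacle is the first paragraph: showing that the $\rhomb{j}$ glue along exactly the inner facets, which are genuinely interior to $\vorzero$, so that the outer facets constitute precisely $\partial\vorzero$. This hinges on re-coordinatizing one parallelepiped in another's frame via $\vxi_j=-\sum_{k\ne j}\vxi_k$ and reading off the sign of the decisive coordinate; everything after that is index bookkeeping together with the short computations in~(1)--(3).
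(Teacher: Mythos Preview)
Your argument is correct, but it takes a genuinely different route from the paper. The paper proves that the $n(n+1)$ faces cover the boundary by a purely group-theoretic count: since the boundary of $\vorzero$ is the $\Gsym{n+1}$-orbit of the roof of $\simplex_0$ (\reflem{lem:V.decomp.simplex}) and one face $\vxi_1+\rhomb{1,n+1}$ is the $\Gii$-orbit of that roof (\reflem{lem:rhomb.face}), it suffices to exhibit a factorisation $\Gsym{n+1}=\Gshift{n+1}\,\Giii\,\Gii$ with matching cardinality $(n+1)\cdot n\cdot(n-1)!$; the $n(n+1)$ elements of $\Gshift{n+1}\Giii$ then index the cosets of $\Gii$ and hence the boundary cells. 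You instead read the boundary directly off the rhombic subdivision $\vorzero=\bigsqcup_j\rhomb{j}$: using the relation $\vxi_j=-\sum_{k\ne j}\vxi_k$ to show $\rhomb{j}\cap\rhomb{j'}=\rhomb{j,j'}$ with the two pieces on opposite sides, you conclude that exactly the ``outer'' facets $\vxi_m+\rhomb{j,m}$ survive to the boundary. Your approach is more geometric and explains \emph{why} these particular faces are the cells of $\vorzero$ (they are the facets of the $\rhomb{j}$ not glued to a neighbour), at the price of having to verify the face-to-face gluing; the paper's approach is shorter and fits its overall algebraic style, but leans on the somewhat terse triple-product decomposition of $\Gsym{n+1}$. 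For parts~(1)--(3) your computations coincide with the paper's; your forward reference to~(2) to separate the faces by the hyperplanes $H_{ij}$ is harmless since~(2) is an independent calculation.
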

\begin{proof}
    Since 
    \eqref{eq:G2}
    \eqref{eq:G3}
    \[
        \Gshift{n+1},\Gii,\Giii\le\Gsym{n+1}
        \text{ and }
        \Gshift{n+1}\cap \Gii\cap \Giii=\{\matid{n+1}\},
        \]
        the $n(n+1)$ images of $\vxi_1+\rhomb{1,n+1}$ under $\Gshift{n+1}\Giii$
        do not overlap.
            Moreover,
            since $|\Gshift{n+1}\Giii\Gii|=|\Gsym{n+1}|$,
            $\Gsym{n+1}$ can be
            represented as
            \[
                \Gsym{n+1}=\Gshift{n+1}\Giii \Gii  
\]
            and therefore the $n(n+1)$ images of $\vxi_1+\rhomb{1,n+1}$
            under $\Gshift{n+1}\Giii$
            form the boundary of $\vorzero$.

\begin{enumerate}[(1)]
\item
    The claim holds since
    \[
        \Mshift{n+1}{j}
        \begin{bmatrix}
            \Mshift{n}{i-j-1}  \\
                &   1
        \end{bmatrix}
        (\vxi_1+\rhomb{1,n+1})
        =
        \Mshift{n+1}{j}
        (\vxi_{i-j}+\rhomb{i-j,n+1})
        =
            \vxi_{i}
            +\rhomb{i,j}.
        \]

\item
    The claim holds since
    $
        \vxi\cdot(\vunit{i}-\vunit{j})=0$
         for 
             $
        \vxi\in\mXi_{ij}$.
\item
    Due to
\reflem{lem:rhomb.face},
    $\vxi_i+\rhomb{ij}$
    is composed of the $(n-1)!$ 
    simplices sharing the edge connecting 
    \[
    \vxi_i
        \text{  and }
    \vxi_i+\sum_{
            \mathclap{
                \substack{
                    k=1\\k\ne i,j}}}^{n+1}\vxi_k=-\vxi_j.
    \]
\end{enumerate}

\end{proof}

\section{Voronoi Cell as the Vertex-First Projection of the Unit Cube}
\begin{figure*}
\begin{center}
    \includegraphics[width=.4\textwidth]{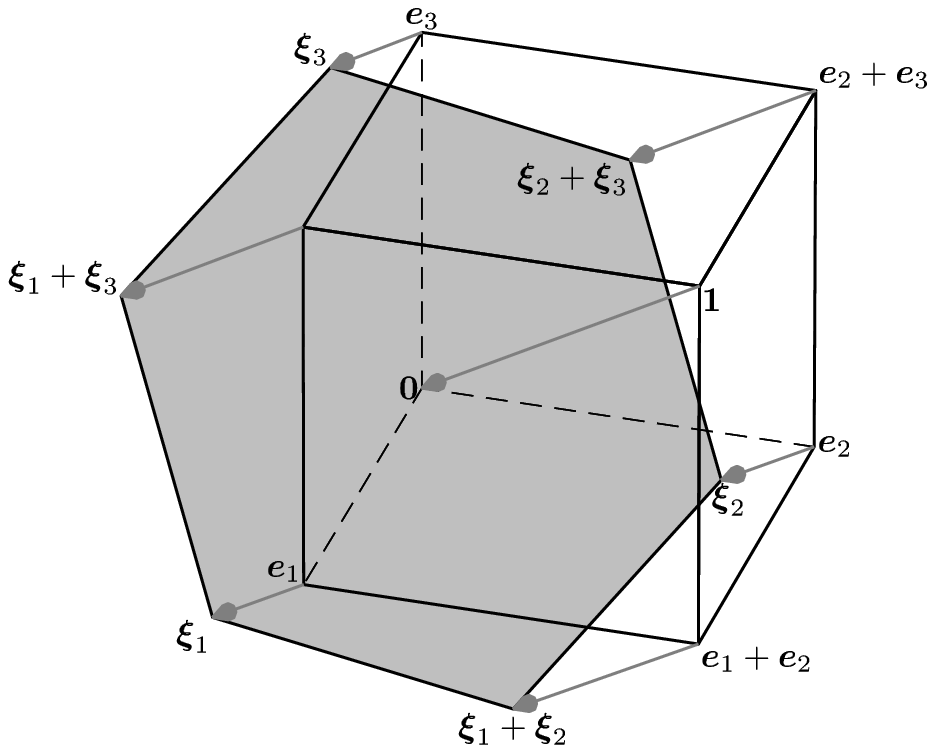}
\end{center}
\caption[]{
    The orthogonal
        projection of $\cube{3}{}$ onto $\plnone$ along $\vone$
        is the Voronoi cell of $\An{2}$.
}
\label{fig:proj}
\end{figure*}

We show that $\vorzero$
is the orthogonal projection
of the $(n+1)$-dimensional unit cube onto $\plnone$
along the `diagonal direction' $\vone$,
called the \emph{vertex-first projection}
\cite{coxeter73regular}.
\begin{lemma}
    $\vorzero$ is the projection
of 
$\cube{n+1}{}$ via $\mXi$.
See \reffig{fig:proj} for $n=2$.
\end{lemma}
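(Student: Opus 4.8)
The plan is to recognize the right-hand side $\mXi\cube{n+1}{}$ for what it geometrically is and then match it against a decomposition we already possess. Since $\mXi=\matid{n+1}-\frac{1}{n+1}\matone{n+1}$ is exactly the orthogonal projection onto $\plnone$ along $\vone$, and since $\vzero$ is a vertex of $\cube{n+1}{}$ with $\vone=\sum_i\vunit{i}$ the body diagonal emanating from it, $\mXi\cube{n+1}{}$ is precisely the \emph{vertex-first orthogonal projection} of the $(n{+}1)$-cube. The goal is therefore to show $\mXi\cube{n+1}{}=\bigsqcup_{j=1}^{n+1}\rhomb{j}$, which equals $\vorzero$ by \reflem{lem:V.decomp.rhombi}.

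First I would establish the easy inclusion $\bigcup_{j=1}^{n+1}\rhomb{j}\subseteq\mXi\cube{n+1}{}$: the face $\cube{n+1}{\{j\}}$ of the unit cube is spanned by $\{\vunit{i}:i\ne j\}$, so $\mXi\cube{n+1}{\{j\}}=\bigl\{\sum_{i\ne j}t_i\vxi_i:0\le t_i\le 1\bigr\}=\rhomb{j}$, and $\cube{n+1}{\{j\}}\subseteq\cube{n+1}{}$. For the reverse inclusion — the one place a (tiny) idea is needed — take $\vx=\sum_{i=1}^{n+1}t_i\vxi_i\in\mXi\cube{n+1}{}$ with all $t_i\in[0,1]$, let $t_\ast=\min_i t_i$ be attained at an index $j_0$, and use $\sum_{i=1}^{n+1}\vxi_i=\mXi\vone=\vzero$ to subtract $t_\ast\sum_i\vxi_i=\vzero$:
\[
    \vx=\sum_{i=1}^{n+1}(t_i-t_\ast)\vxi_i=\sum_{\substack{i=1\\ i\ne j_0}}^{n+1}(t_i-t_\ast)\vxi_i,
\]
with every coefficient again in $[0,1]$; hence $\vx\in\rhomb{j_0}$. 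The two inclusions together with \reflem{lem:V.decomp.rhombi} give the claim.

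An alternative, more structural derivation — and the one that makes the zonotope statement transparent — routes through the Kuhn triangulation: apply \reflem{lem:kuhn} in dimension $n{+}1$ to write $\cube{n+1}{}=\bigsqcup_{\Mperm\in\Gsym{n+1}}\Mperm\simplexCube_0$, note that $\mXi$ commutes with every permutation matrix (since $\Mperm\vone=\vone$, exactly as in the proof of \refcor{cor:kuhn}), so $\mXi\Mperm\simplexCube_0=\Mperm\,\mXi\simplexCube_0$, and observe $\mXi\simplexCube_0=\simplex_0$ because $\mXi\vu_j=\vv_j$ for $0\le j\le n$ while the extra vertex $\vu_{n+1}=\vone$ of the $(n{+}1)$-dimensional Kuhn simplex satisfies $\mXi\vu_{n+1}=\vzero=\vv_0$. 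Then $\mXi\cube{n+1}{}=\bigcup_{\Mperm}\Mperm\simplex_0=\vorzero$ by \reflem{lem:V.decomp.simplex}. This exhibits $\vorzero$ as a linear image of a cube, i.e.\ as the Minkowski sum of the segments $[\vzero,\vxi_i]$, which is why it is a zonotope.

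I do not anticipate a real obstacle; the single subtle point is the collapse $\mXi\vone=\vzero$ (equivalently $\sum_i\vxi_i=\vzero$). It is what allows an $(n{+}1)$-dimensional cube to project onto an $n$-dimensional cell assembled from only $n$ of the $n{+}1$ projected edge vectors, and it is also the reason the two hyper-rhombic decompositions, $\vorzero=\bigsqcup_j\rhomb{j}$ and the flipped one $\vorzero=\bigsqcup_j(\vxi_j+\rhomb{j})$ of \eqref{eq:V.decomp.rhomb.2}, describe the same set.
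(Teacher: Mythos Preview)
Your proposal is correct. In fact your ``alternative, more structural derivation'' via the Kuhn triangulation is precisely the paper's own proof: observe $\mXi\vu_j=\vv_j$ for $0\le j\le n$ and $\mXi\vu_{n+1}=\mXi\vone=\vzero=\vv_0$, so $\mXi\simplexCube_0=\simplex_0$; then use $\mXi\Mperm=\Mperm\mXi$ for $\Mperm\in\Gsym{n+1}$ and invoke \reflem{lem:V.decomp.simplex}. Your \emph{first} argument, by contrast, is a genuinely different route: it rests on the hyper-rhombic decomposition \reflem{lem:V.decomp.rhombi} rather than the simplicial one, and the subtract-the-minimum trick $\vx=\sum_i(t_i-t_\ast)\vxi_i$ gives a direct, coordinate-level proof of the reverse inclusion without any appeal to Kuhn. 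This buys you a shorter, more elementary argument that also foreshadows the next lemma (that each $\rhomb{j}$ is the projection of a facet $\cube{n+1}{\{j\}}$); the paper's route has the complementary advantage of exhibiting the simplex-to-simplex correspondence between $\cube{n+1}{}$ and $\vorzero$, which fits the paper's overall narrative built on the fundamental simplex.
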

\begin{proof}
Note that
the $(n+1)$-simplex $\simplexCube_0$ 
\eqref{eq:kuhn.simplex}
    is projected to
    the 
    fundamental simplex 
    $\simplex_0$
    \eqref{eq:verts.simplex.0}
    since 
    \[
        \mXi\vu_j=
        \begin{cases}
            \vzero=\vv_0  &   j\in\{0,n+1\}   \\
            \sum_{i=1}^j\vxi_i=\vv_j  &   1\le j\le n.
        \end{cases}
    \]
    Specifically, the $n$ vertices 
    $\left\{\vv_j
        :1\le j\le n\right\}$ form the roof
    of $\simplex_0$
(\reflem{lem:V.decomp.simplex}).
    Since 
    \[
        \mXi\Mperm=\Mperm\mXi,
        \quad \Mperm\in\Gsym{n+1}      
    \]
    and
    $\vorzero$ is the union of
    the $(n+1)!$ images of $\simplex_0$
    under $\Gsym{n+1}$, the claim holds.
\end{proof}
We can deduce that
the projections of
the vertices of $\cube{n+1}{}$, excluding $\vzero$ and $\vone$,
are the vertices of $\vorzero$
and
therefore $\vorzero$ is the convex hull of these $(2^{n+1}-2)$ vertices.

Alternatively,
$\vorzero$
is a \emph{zonotope}
generated by the line segments $\vxi_1,\dots,\vxi_{n+1}.$
\cite{ziegler95lectures}:
\[
    \vorzero
    =
    \mXi\cube{n+1}{}
    =
    \left\{
        \sum_{j=1}^{n+1}
        t_j\vxi_j:0\le t_j\le 1
    \right\}.
\]

Next we show that the $(n+1)$ composing hyper-rhombi
of $\vorzero$
are the projections of the  cubic cells of $\cube{n+1}{}$
adjacent to the vertex $\vzero$ via $\mXi$
(\reffig{fig:proj}).

\begin{lemma}
The composing hyper-rhombi 
of $\vorzero$
(\reflem{lem:V.decomp.rhombi})
are the orthogonal projections of the $n$-dimensional cubic cells
of $\cube{n+1}{}$
adjacent to $\vzero$
via $\mXi$.
In other words,
\[
    \rhomb{j}=\mXi\cube{n+1}{j},
    \quad
    1\le j\le n+1.
\]
\end{lemma}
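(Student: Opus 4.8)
The plan is to obtain the identity directly from the definitions, exploiting that $\mXi$ is linear and that its columns are exactly the edge vectors of the hyper-rhombi. Recall from the Notations section that $\cube{n+1}{j}=\{\sum_{i\ne j}t_i\vunit{i}:0\le t_i\le 1\}$ is the parallelepiped (the cubic facet of $\cube{n+1}{}$ incident to $\vzero$) spanned by all the standard unit vectors except $\vunit{j}$, and recall that $\vxi_i=\mXi\vunit{i}$ is the $i$-th column of $\mXi$. First I would simply push the face through $\mXi$ and use linearity:
\[
    \mXi\cube{n+1}{j}
    =\left\{\mXi\sum_{\substack{i=1\\ i\ne j}}^{n+1}t_i\vunit{i}:0\le t_i\le 1\right\}
    =\left\{\sum_{\substack{i=1\\ i\ne j}}^{n+1}t_i\vxi_i:0\le t_i\le 1\right\}
    =\mXi_j[0,1]^{n}=\rhomb{j},
\]
where the last two equalities are just the definition of $\rhomb{j}$. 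Since any $n$ columns of $\mXi$ are linearly independent (as already used in Section~\ref{sec:V.decomp.rhombi}), $\mXi$ is injective on $\mathrm{span}\{\vunit{i}:i\ne j\}$, so the image is a genuine $n$-dimensional parallelepiped and the $(n+1)$ cubic cells $\cube{n+1}{1},\dots,\cube{n+1}{n+1}$ are carried onto the $(n+1)$ \emph{distinct} hyper-rhombi of \reflem{lem:V.decomp.rhombi}.

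I would also record the version that meshes with the triangulation-based decomposition and with \reffig{fig:proj}. Regarding $\cube{n+1}{n+1}$ as an $n$-cube spanned by $\vunit{1},\dots,\vunit{n}$, its Kuhn simplex $\CH{\{\vu_j:0\le j\le n\}}$ (\reflem{lem:kuhn}) is mapped by $\mXi$ onto $\simplex_0=\CH{\{\vv_j:0\le j\le n\}}$, because $\mXi\vu_j=\vv_j$ for $0\le j\le n$ (this computation already appears in the proof of the preceding lemma). As $\mXi\Mperm=\Mperm\mXi$ for every $\Mperm\in\Gi$, taking the union over $\Gi$ and invoking \reflem{lem:V.decomp.rhombi} gives $\mXi\cube{n+1}{n+1}=\rhomb{n+1}$. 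Finally $\mXi$ commutes with the $(n+1)$-dimensional circular shifts as well, and $\Mshift{n+1}{j\bmod (n+1)}$ permutes coordinates so that $\cube{n+1}{n+1}\mapsto\cube{n+1}{j}$ and $\rhomb{n+1}\mapsto\rhomb{j}$ (the coordinate form of the relation $\Mshift{n+1}{j\bmod (n+1)}\mXi_{n+1}=\mXi_j$ used in the proof of \reflem{lem:V.decomp.rhombi}); applying $\Gshift{n+1}$ then yields $\rhomb{j}=\mXi\cube{n+1}{j}$ for all $j$.

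There is essentially no obstacle here: once the identifications $\vxi_i=\mXi\vunit{i}$ and $\cube{n+1}{j}=\{\sum_{i\ne j}t_i\vunit{i}\}$ are in place, the claim is immediate from linearity, and the only point worth a remark is the non-degeneracy noted above, which comes from the general-position property of the columns of $\mXi$. The real content is conceptual rather than computational: combined with the previous lemma it shows that the vertex-first incidence figure at $\vzero$ in $\cube{n+1}{}$ — the vertex together with its $n+1$ incident cubic facets — is transported face-by-face onto $\vorzero$ and its hyper-rhombic decomposition, while the edge $\CH{\{\vzero,\vone\}}$, being parallel to the projection direction $\vone$, collapses to the single point $\vzero$, consistent with all the pieces meeting there.
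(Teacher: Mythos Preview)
Your proposal is correct and your first displayed chain of equalities is essentially identical to the paper's own proof, which simply unwinds the definitions $\rhomb{j}=\{\sum_{i\ne j}t_i\vxi_i\}$, $\vxi_i=\mXi\vunit{i}$, and $\cube{n+1}{j}=\{\sum_{i\ne j}t_i\vunit{i}\}$ via linearity. The additional material you include (the non-degeneracy remark, the Kuhn-triangulation argument with $\Gi$ and $\Gshift{n+1}$, and the conceptual commentary) is correct but goes beyond what the paper records; the paper's proof is just the one-line computation.
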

\begin{proof}
The claim holds since 
\begin{align*}
    \rhomb{j}
    &=
    \left\{\sum_{\mathclap{\substack{i=1\\i\ne j}}}^{n+1}t_i\vxi_i:0\le t_i\le 1\right\}
    =
    \left\{\sum_{\mathclap{\substack{i=1\\i\ne j}}}^{n+1}t_i\mXi\vunit{i}:0\le t_i\le 1\right\}
    \\
    &=
    \left\{\mXi\sum_{\mathclap{\substack{i=1\\i\ne j}}}^{n+1}t_i\vunit{i}:0\le t_i\le 1\right\}
    =
    \mXi\cube{n+1}{j}.
\end{align*}
\end{proof}

Note that the other hyper-rhombic decomposition
in
\eqref{eq:V.decomp.rhomb.2}
are the projection of the faces of $\cube{n+1}{}$
via $\mXi$
adjacent to the vertex $\vone$
(\reffig{fig:proj}).
In other words,
\[
    \vxi_j+\rhomb{j}
    =
    \mXi
    \left(
        \vunit{j}+\cube{n+1}{j}
        \right),
        \quad
        1\le j\le n+1.
\]

\section{Voronoi Cell 
as the Section of the Voronoi Cell of the $\Dn{n+1}$ Lattice}

\begin{figure*}
\begin{center}
\includegraphics[width=.4\textwidth]{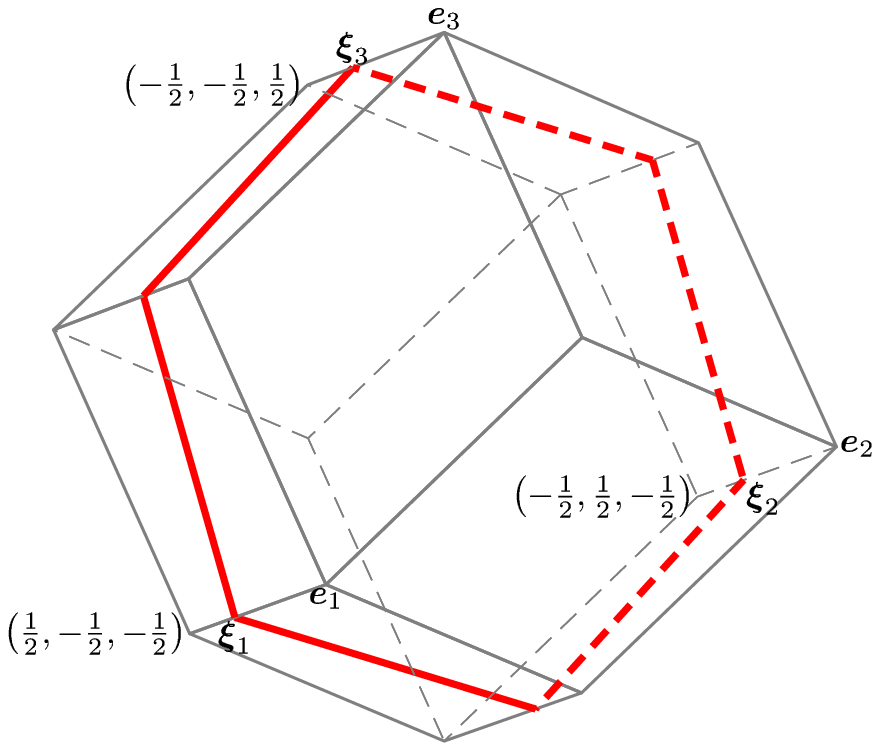}
\end{center}
\caption[]{
    The section of the Voronoi cell of the $\Dn{3}$ lattice with $\plnone$
    is the Voronoi cell of the $\An{2}$ lattice.
}
\label{fig:section}
\end{figure*}
The following lemma
shows that
$\vorzero$ is the 
intersection
of the Voronoi cell of the $\Dn{n+1}$ lattice,
called 
\emph{section} \cite{coxeter73regular},
with $\plnone$ in low dimensions.
\begin{lemma}
Let
\[
    \ZDn{n+1}:=
    \left\{
        \vx\in\Z^{n+1}:\vx\cdot\vone\text{ is even}
    \right\}
\]
be the $\Dn{n+1}$ lattice
and $\vor{\Dn{n+1}}{\vp}$ be the Voronoi cell
at $\vp\in\ZDn{n+1}$.
In dimensions $n\in\{1,2,3\}$ only,
the following holds.
\[
    \vor{}{\vp}=
    \vor{\Dn{n+1}}{\vp}\cap\plnone,
    \quad
    \vp\in\ZAn{n}.
\]
\end{lemma}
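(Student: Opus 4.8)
The plan is to reduce to $\vp=\vzero$ and then compare the two cells through their defining half-space inequalities. Since a vector with zero coordinate sum also has even coordinate sum, $\ZAn{n}\subseteq\ZDn{n+1}$; in particular $\vp\in\ZAn{n}\subseteq\ZDn{n+1}$, so both Voronoi cells are the $\vp$-translates of the corresponding cells at $\vzero$, and because $\vp\cdot\vone=0$ one has $\bigl(\vp+\vor{\Dn{n+1}}{\vzero}\bigr)\cap\plnone=\vp+\bigl(\vor{\Dn{n+1}}{\vzero}\cap\plnone\bigr)$. Hence it is enough to prove $\vorzero=\vor{\Dn{n+1}}{\vzero}\cap\plnone$.

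One inclusion is immediate in every dimension: if $\vx\in\plnone$ is at least as close to $\vzero$ as to every point of $\ZDn{n+1}$, then a fortiori it is at least as close to $\vzero$ as to every point of the sublattice $\ZAn{n}$, so $\vor{\Dn{n+1}}{\vzero}\cap\plnone\subseteq\vorzero$. The content is the reverse inclusion, which I would prove with explicit inequalities. By the facet description established earlier, the $n(n+1)$ facets of $\vorzero$ lie in the bisecting hyperplanes $\{x_i-x_j=1\}$ of the nearest neighbours $\vunit{i}-\vunit{j}$, so $\vorzero=\{\vx\in\plnone:x_i-x_j\le1\text{ for all }i\ne j\}$. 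For $n+1\le4$ the only Voronoi-relevant vectors of $\Dn{n+1}$ are its minimal vectors $\pm\vunit{i}\pm\vunit{j}$, i.e. $\vor{\Dn{n+1}}{\vzero}=\{\vx\in\R^{n+1}:|x_i\pm x_j|\le1,\ i<j\}$ (the square, the rhombic dodecahedron, and the $24$-cell). So for $\vx\in\vorzero$ I must verify $|x_i\pm x_j|\le1$. The two root inequalities $x_i-x_j\le1$ and $x_j-x_i\le1$ give $|x_i-x_j|\le1$ at once. For $|x_i+x_j|\le1$ I would add, over all $k\notin\{i,j\}$, the inequalities $x_i-x_k\le1$ and $x_j-x_k\le1$ and substitute $\sum_k x_k=0$; this yields $x_i+x_j\le\frac{2(n-1)}{n+1}$, which is $\le1$ exactly when $n\le3$, and the symmetric computation gives $-(x_i+x_j)\le\frac{2(n-1)}{n+1}\le1$. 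Thus $\vx\in\vor{\Dn{n+1}}{\vzero}$, and since $\vx\in\plnone$ the reverse inclusion follows. (For $n=1$ the only pair is $\{1,2\}$ and $x_1+x_2=0$ on $\plnone$, so this step is vacuous.)

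The main obstacle is justifying the facet structure of $\vor{\Dn{n+1}}{\vzero}$ used above: that for $n+1\le4$ no lattice vector of $\Dn{n+1}$ beyond the minimal vectors contributes a facet. Non-minimal vectors such as $2\vunit{i}$, or $(\pm1,\dots,\pm1)$ for $\Dn4$, are only weakly relevant --- their half-space inequalities are implied by $|x_i\pm x_j|\le1$ and are tight only at a vertex of the cell --- and this is exactly what breaks for $n+1\ge5$. I would either verify this case by case or invoke the classical descriptions of these low-dimensional root-lattice Voronoi cells \cite{conway98sphere}. Finally, to account for the word ``only'' in the statement, I would observe that the bound $\frac{2(n-1)}{n+1}$ is attained on $\vorzero$ at the point with $x_i=x_j=\frac{n-1}{n+1}$ and $x_k=-\frac{2}{n+1}$ for $k\notin\{i,j\}$; for $n\ge4$ this point lies in $\vorzero$ but violates the relevant-vector inequality $(\vunit{i}+\vunit{j})\cdot\vx\le1$ of $\Dn{n+1}$, so there the section is a proper subset of $\vorzero$.
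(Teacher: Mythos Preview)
Your proof is correct and takes a genuinely different route from the paper's. The paper argues geometrically: it identifies the off-plane $\Dn{n+1}$ points closest to $\plnone$ (namely $\vp\pm(\vunit{i}+\vunit{j})$), then uses the known \emph{vertex} description of $\vor{\Dn{n+1}}{\cdot}$ (the pyramidal cube) to check, case by case, on which side of $\plnone$ those vertices lie; the tiling of $\plnone$ by the sections then forces equality with the $\An{n}$ tessellation. You instead compare the two cells through their \emph{facet} inequalities: the description $\vorzero=\{\vx\in\plnone:x_i-x_j\le1\}$ from the boundary analysis earlier, together with $\vor{\Dn{n+1}}{\vzero}=\{|x_i\pm x_j|\le1\}$, reduces the question to the single sharp estimate $x_i+x_j\le\frac{2(n-1)}{n+1}$, which you obtain by summing the root constraints and using $\sum x_k=0$. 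This is cleaner and makes the threshold $n=3$ transparent; your ``only'' witness $\bigl(\frac{n-1}{n+1},\frac{n-1}{n+1},-\frac{2}{n+1},\dots\bigr)$ is exactly the vertex $\vv_2=\vxi_1+\vxi_2$ of the fundamental simplex. One minor point: your caveat that the facet description of $\Dn{n+1}$ ``breaks for $n+1\ge5$'' is not quite right---the relevant vectors of $\Dn{m}$ are the minimal vectors $\pm\vunit{i}\pm\vunit{j}$ for \emph{all} $m\ge2$, so nothing changes there; what breaks is only your inequality $\frac{2(n-1)}{n+1}\le1$, which you already handle. Citing the standard description in \cite{conway98sphere} suffices to close the gap you flagged.
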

\begin{proof}
Since
\[
    \ZAn{n}=
    \Z^{n+1}\cap\plnone
    =   
    \ZDn{n+1}\cap\plnone,
\]
we only need to show that 
\[
    \dim\left(\plnone \cap \vor{\Dn{n+1}}{\vp}\right)<n,\quad \forall
    \vp\notin\ZAn{n}.
\]

Note that the closest lattice points of $\ZDn{n+1}$ to $\plnone$
which are not on $\plnone$ are those of the form
\[
    \vp\pm\left(\vunit{i}+\vunit{j}\right),
    \quad
    \vp\in\ZAn{n}
    \text{ and }
    1\le i\ne j\le n+1.
\]
Due to the symmetry, we only need to analyze the intersection
of $\vor{\Dn{n+1}}{\vunit{1}+\vunit{2}}$ with $\plnone$.

It is known that $\vor{\Dn{n+1}}{\vp}$ 
is the 
    \emph{pyramidal cube} \cite{conway91cell}
whose vertices are
\[
    \left\{
            \vp+
    \left(
        \pm\frac{1}{2},\cdots,\pm\frac{1}{2}
    \right)
    \right\}
    \cup
    \left\{
            \vp
           \pm\vunit{j}:
           1\le j\le n+1 
    \right\}
\]
and therefore
for $\vp=\vunit{1}+\vunit{2}$,
\[
    -\frac{n-3}{2}
    \le
    \left(
        (\vunit{1}+\vunit{2})
        +
    \left(
        \pm\frac{1}{2},
        \cdots,\pm\frac{1}{2}
    \right)
    \right)
    \cdot\vone
    \le
    \frac{n+5}{2}
\]
and
\[
    \left(
        (\vunit{1}+\vunit{2})
        \pm
        \vunit{j}
    \right)
    \cdot\vone
    \ge 1,
    \quad
    1\le j\le n+1.
\]
Therefore,
for $n\in\{1,2\}$,
all the vertices of $\vor{\Dn{n+1}}{\vunit{1}+\vunit{2}}$ belong to the half space $\{\vx\cdot\vone>0:\vx\in\R^{n+1}\}$.
We only need to show that 
$\vor{\Dn{4}}{(1,1,0,0)}$
trivially intersect $\plnone$,
which can be easily verified since the only vertex of
$\vor{\Dn{4}}{(1,1,0,0)}$
that intersects $\plnone$ is
$
    \left(
    \frac{1}{2},
    \frac{1}{2},
    -\frac{1}{2},
    -\frac{1}{2}\right)
    $.
For $n>3$,
the vertex
$
    (
    \frac{1}{2},
    \frac{1}{2},
    -\frac{1}{2},
    \cdots,
    -\frac{1}{2},
    )\cdot\vone<0
$
and therefore $\vor{\Dn{n+1}}{\vunit{1}+\vunit{2}}$
intersect $\plnone$ with $n$-dimensional volume.

\end{proof}
Specifically, the Voronoi cell of $\ZDn{3}$ 
is
the rhombic dodecahedron
whose section with $\plnone$ is the regular hexagon
(\reffig{fig:section})
\[
    \CH{
            \left\{
         \pm\frac{2}{3}   
        \perm\left( 2,-1,-1 \right)
            \right\}
    }
\]
where $\perm$ denotes all the permutations
of the coordinate values.
The Voronoi cell of $\ZDn{4}$
is the \emph{$24$-cell}
whose section with $\plnone$ is the rhombic
dodecahedron
\[
    \CH{
    \left\{
            \frac{1}{2}
            \perm\left(
        1,1,-1,-1
        \right)
            \right\}
    \cup
    \left\{
            \pm\frac{1}{4}
            \perm(3,-1,-1,-1)
        \right\}
    },
\]
which is transformed to 
\[
    \CH{
            \left\{
            \perm(\pm1,0,0)
            \right\}
            \cup
            \left\{
            \left(\pm\frac{1}{2},\pm\frac{1}{2},\pm\frac{1}{2}\right)
            \right\}
    }
\]
by $\Mproj{3}$.

\section{Conclusion}
We characterized the combinatorial structure of the Voronoi cell
of the $\An{n}$ lattice.
Based on the well-known fact that it is composed of $(n+1)!$ congruent
simplices, we showed that it is composed of $(n+1)$ congruent
hyper-rhombi. 
We analyzed the explicit structure of the hyper-rhombic cells
of the Voronoi cell, including the fact that all the $k$-dimensional faces,
$2\le k\le n-2$, are hyper-rhombic.
We showed that it is the vertex-first projection
of the $(n+1)$-dimensional unit cube and finally
verified that in low dimensions, $n\le 3$, it is the section
of the Voronoi cell of the $\Dn{n+1}$ lattice with the diagonal
hyperplane.
All the analyses are done algebraically without any Coxeter-Dynkin diagrams
so they are more accessible to those who are not familiar with them.

\section*{Acknowledgments}
        This work was supported by the National Research Foundation of Korea(NRF) grant 
        funded by the Korea government(MSIT) (No.  2021R1F1A1060215).

\bibliographystyle{plainnat}
\bibliography{p}

\end{document}